\documentclass[11pt,a4paper,reqno]{amsart}
\usepackage[english]{babel}
\usepackage[applemac]{inputenc}
\usepackage[T1]{fontenc}
\usepackage{palatino}
\usepackage{amsmath}
\usepackage{amssymb}
\usepackage{amsthm}
\usepackage{amsfonts}
\usepackage{graphicx}
\usepackage{mathtools}

\DeclarePairedDelimiter\floor{\lfloor}{\rfloor}
\usepackage[colorlinks = true, citecolor = black]{hyperref}
\pagestyle{headings}
\author{Tuomas Orponen}
\title{Projections of planar sets in well-separated directions}
\address{University of Helsinki, Department of Mathematics and Statistics}
\subjclass[2010]{28A80 (Primary), 52C30 (Secondary)}
\thanks{T.O. is supported by the Academy of Finland through the grant Restricted families of projections and connections to Kakeya type problems. The research was also partially supported by the European Research Council through Michael Hochman's ERC grant 306494.}
\email{tuomas.orponen@helsinki.fi}

\newcommand{\R}{\mathbb{R}}
\newcommand{\N}{\mathbb{N}}

\newcommand{\Z}{\mathbb{Z}}

\newcommand{\calT}{\mathcal{T}}

\newcommand{\calD}{\mathcal{D}}
\newcommand{\calH}{\mathcal{H}}

\newcommand{\calF}{\mathcal{F}}
\newcommand{\calS}{\mathcal{S}}

\newcommand{\spt}{\operatorname{spt}}

\newcommand{\calP}{\mathcal{P}}

\newcommand{\E}{\mathbb{E}}

\newcommand{\calE}{\mathcal{E}}

\newcommand{\spa}{\operatorname{span}}
\newcommand{\diam}{\operatorname{diam}}

\newcommand{\dist}{\operatorname{dist}}

\numberwithin{equation}{section}

\theoremstyle{plain}
\newtheorem{thm}[equation]{Theorem}
\newtheorem{conjecture}[equation]{Conjecture}
\newtheorem{lemma}[equation]{Lemma}

\newtheorem{cor}[equation]{Corollary}
\newtheorem{proposition}[equation]{Proposition}

\theoremstyle{definition}

\newtheorem{definition}[equation]{Definition}

\theoremstyle{remark}
\newtheorem{remark}[equation]{Remark}

\addtolength{\hoffset}{-1.15cm}
\addtolength{\textwidth}{2.3cm}
\addtolength{\voffset}{0.45cm}
\addtolength{\textheight}{-0.9cm}

\begin{document}

\begin{abstract} This paper contains two new projection theorems in the plane. 

First, let $K \subset B(0,1) \subset \R^{2}$ be a set with $\calH_{\infty}^{1}(K) \sim 1$, and write $\pi_{e}(K)$ for the orthogonal projection of $K$ into the line spanned by $e \in S^{1}$. For $1/2 \leq s < 1$, write
\begin{displaymath} E_{s} := \{e : N(\pi_{e}(K),\delta) \leq \delta^{-s}\}, \end{displaymath}
where $N(A,r)$ is the $r$-covering number of the set $A$. It is well-known -- and essentially due to R. Kaufman -- that $N(E_{s},\delta) \lessapprox \delta^{-s}$. Using the polynomial method, I prove that
\begin{displaymath} N(E_{s},r) \lessapprox \min\left\{\delta^{-s}\left(\frac{\delta}{r}\right)^{1/2},r^{-1}\right\}, \quad \delta \leq r \leq 1. \end{displaymath}
I construct examples showing that the exponents in the bound are sharp for $\delta \leq r \leq \delta^{s}$.

The second theorem concerns projections of $1$-Ahlfors-David regular sets. Let $A \geq 1$ and $1/2 \leq s < 1$ be given. I prove that, for $p = p(A,s) \in \N$ large enough, the finite set of unit vectors $S_{p} := \{e^{2\pi i k/p} : 0 \leq k < p\}$ has the following property. If $K \subset B(0,1)$ is non-empty and $1$-Ahlfors-David regular with regularity constant at most $A$, then
\begin{displaymath} \frac{1}{p} \sum_{e \in S_{p}} N(\pi_{e}(K),\delta) \geq \delta^{-s} \end{displaymath}
for all small enough $\delta > 0$. In particular, $\overline{\dim}_{\textup{B}} \pi_{e}(K) \geq s$ for some $e \in S_{p}$.
\end{abstract}

\maketitle

\section{Introduction}

Let $K \subset B(0,1) \subset \R^{2}$ be a compact set with $\calH_{\infty}^{1}(K) \sim 1$. For $0 \leq s \leq 1$, a classical result of R. Kaufman \cite{Ka}, sharpening the projection theorem of Marstrand \cite{Ma}, states that 
\begin{equation}\label{kaufman} \dim \{e \in S^{1} : \dim \pi_{e}(K) \leq s\} \leq s, \end{equation}
where $\pi_{e}$ denotes orthogonal projection onto $\spa(e)$ and $\dim$ is Hausdorff dimension. It seems unlikely that this bound is sharp for $s < 1$. It is conjectured in D. Oberlin's paper \cite{Ob} that the correct bound is $2s - 1$ instead of $s$, and \cite[Theorem 1.2]{Ob} corroborates this by showing that $\dim \{e : \dim \pi_{e}(K) < 1/2\} = 0$. A stronger, and significantly harder to prove, improvement to \eqref{kaufman} is due to Bourgain \cite{Bo}: a (non-trivial) application of his "discretised sum-product theorem" shows that the left hand side of \eqref{kaufman} tends to zero as $s \searrow 1/2$. However, even Bourgain's method of proof only gives an improvement to \eqref{kaufman} when $s$ is "very close" to $1/2$. So, for example, nothing better than \eqref{kaufman} is currently known for $s = 3/4$. 

The starting point of this paper was to investigate the case where $s$ is far away from $1/2$. In trying to prove statements about Hausdorff dimension, such as \eqref{kaufman}, a natural intermediary step is to find and solve a "$\delta$-discretised" analogue of the problem. In the current situation, the simplest such analogue is probably the following: fix $\delta > 0$, and let $E_{s}$ be the collection of vectors in $S^{1}$ such that $\pi_{e}(K)$ can be covered by $\leq \delta^{-s}$ intervals of length $\delta$. In symbols,
\begin{displaymath} E_{s} := \{e \in S^{1} : N(\pi_{e}(K),\delta) \leq \delta^{-s} \}, \end{displaymath}
where $N(A,r)$ the least number of $r$-balls required to cover $A$. How many $\delta$-intervals does it take to cover $E_{s}$? An argument close to Kaufman's proof of \eqref{kaufman} shows that 
\begin{equation}\label{discreteKaufman} N(E_{s},\delta) \lessapprox \delta^{-s}, \end{equation}
where $A \lessapprox B$ stands for $A \leq C\log(1/\delta) B$ for some absolute constant $C$. A significant difference between \eqref{kaufman} and \eqref{discreteKaufman} is, however, that $s$ is the best exponent in \eqref{discreteKaufman}, and the example proving this is extremely simple: one needs only take $K$ to be a horizontal unit line segment, and consider its projections (at scale $\delta$) on nearly vertical lines. It is worth emphasising that the bound \eqref{discreteKaufman} is even sharp for $s = 1/2$, whereas \eqref{kaufman} is not, according to Bourgain's result. 

So, the sharpness of \eqref{discreteKaufman} does not imply that \eqref{kaufman} is sharp; neither does it mean that the "$\delta$-discretised approach" to improving \eqref{kaufman} is doomed. However, one certainly needs to ask more subtle questions than "What is the best bound for $N(E_{s},\delta)$?". To find such questions, one can to consider the extremal configurations for \eqref{discreteKaufman}. It was already mentioned that the line segment exhibits worst-case behaviour, but this is surely not the only example: in fact, any union of $\sim \delta^{-s}$ parallel line segments of length $\delta^{s}$ works, as long as the union has large $1$-dimensional Hausforff content. 

Even if the examples $K$ extremal for \eqref{discreteKaufman} may be too diverse to classify, all the configurations I know of seem to have one feature in common: the associated $\approx \delta^{-s}$ directions in $E_{s}$ are very clustered. In the case of the horizontal line segment, for instance, they all lie packed around the vertical direction. Encouraged by this observation, a reasonable conjecture could be the following: if $E$ is any collection of $\sim \delta^{-s}$ vectors, which are "quantitatively not packed together", then $E$ contains a vector $e$ with $N(\pi_{e}(K),\delta) \geq \delta^{-s - \epsilon}$. Here is a more precise formulation:
\begin{conjecture}\label{mainConjecture} Assume that $K \subset B(0,1) \subset \R^{2}$ is a set with $\calH^{1}_{\infty}(K) \sim 1$. Let $E \subset S^{1}$ be any $\delta$-separated set of directions with cardinality $|E| \sim \delta^{-s}$, satisfying the non-concentration hypothesis
\begin{equation}\label{nonConc} |E \cap B(x,t)| \lesssim t^{\kappa}|E|, \qquad x \in S^{1}, \: t \geq \delta, \end{equation}
for some $\kappa > 0$. Then $N(\pi_{e}(K),\delta) \geq \delta^{-s - \epsilon}$ for some $e \in E$, where $\epsilon > 0$ is a constant depending only on $\kappa,s$. 
\end{conjecture}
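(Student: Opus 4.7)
The plan is to argue by contradiction: assume $N(\pi_{e}(K),\delta) \leq \delta^{-s-\epsilon}$ for every $e \in E$, fix a Frostman measure $\mu$ on $K$ with $\mu(B(x,r)) \lesssim r$ and $\mu(K) \sim 1$ (available since $\calH_{\infty}^{1}(K) \sim 1$), and re-express the hypothesis as saying that $K$ is covered by a tube family $\calT = \bigcup_{e \in E} \calT_{e}$ with $|\calT_{e}| \leq \delta^{-s-\epsilon}$ and direction set $E$ obeying \eqref{nonConc}. In this formulation the conjecture becomes a Furstenberg-type lower bound on the complexity of any $\delta$-cover of $K$ by tubes with a $\kappa$-non-concentrated direction set.

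First I would try the $L^{2}$ energy approach. By Cauchy--Schwarz,
\begin{equation*}
N(\pi_{e}(K),\delta) \geq \frac{\mu(K)^{2}}{\iint \1_{\{|(x-y)\cdot e| \leq \delta\}}\,d\mu(x)\,d\mu(y)},
\end{equation*}
and for fixed $x,y$ the set of $e \in S^{1}$ with $|(x-y)\cdot e| \leq \delta$ is an arc of length $\sim \delta/|x-y|$. Summing over $e \in E$, the non-concentration \eqref{nonConc} together with $I_{\kappa}(\mu) \lesssim 1$ yields
\begin{equation*}
\sum_{e \in E} \iint \1_{\{|(x-y)\cdot e| \leq \delta\}}\, d\mu(x)\,d\mu(y) \lesssim |E|\,\delta^{\kappa}.
\end{equation*}
Combined with the assumed lower bound $\gtrsim \delta^{s+\epsilon}$ for each term, this forces $s + \epsilon \geq \kappa$, which is useful only when $\kappa > s$. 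Thus the $L^{2}$ argument alone settles the conjecture when $E$ is essentially full-dimensional on $S^{1}$, but delivers nothing beyond Kaufman's bound \eqref{discreteKaufman} in the genuinely hard regime $\kappa \ll s$.

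To reach small $\kappa$ I would pass to higher moments. Setting $f = \sum_{T \in \calT} \1_{T}$, one has $\int f\,d\mu = \sum_{T} \mu(T) \geq |E|\mu(K)$ (since each $\calT_{e}$ covers $K$), while $\|f\|_{L^{p}(\mu)}^{p} = \sum_{T_{1},\ldots,T_{p}} \mu(T_{1} \cap \cdots \cap T_{p})$ is the weighted $p$-fold tube incidence count. A H\"older-style comparison between these $L^{1}$ and $L^{p}$ quantities then reduces the conjecture to a polynomial gain over the trivial $p$-fold intersection count under the direction non-concentration \eqref{nonConc}, i.e.\ to a discretised Furstenberg-set estimate. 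The polynomial partitioning method driving this paper's first main theorem appears to be the natural tool for producing such a gain: partition $\R^{2}$ by a polynomial of degree $\sim \delta^{-\eta}$, route most tubes into cells with comparable tube counts, and iterate while using \eqref{nonConc} to keep the direction set well-distributed in each cell.

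The main obstacle, and the reason the statement is posed only as a conjecture, is that a $p$-fold incidence gain valid for all $\kappa > 0$ is essentially equivalent to improving Kaufman's bound \eqref{kaufman} for every $s \in (1/2,1)$ -- a problem that is wide open, with nothing better than \eqref{kaufman} known even at $s = 3/4$. Bourgain's discretised sum-product theorem produces a gain near $s = 1/2$, and it is unclear how to propagate it to larger $s$ without an essentially new combinatorial input. A realistic partial attack would couple the polynomial method with a multi-scale bootstrap to amplify a small initial gain; but even this seems unlikely to cover the full range of $(s,\kappa)$ in the conjecture without genuinely new ideas, and the weaker theorems announced in the abstract are almost certainly the reason the author settles for unconditional results in restricted regimes.
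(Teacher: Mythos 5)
The statement you were asked about is Conjecture~\ref{mainConjecture}, which the paper explicitly labels as a conjecture and does \emph{not} prove; the only positive result recorded for it is Bourgain's, covering the regime where $s$ is close to $1/2$. There is therefore no ``paper's own proof'' to compare against, and your ultimate conclusion --- that the full statement is out of reach with known techniques, which is exactly why the paper substitutes the $r$-separation hypothesis of Theorem~\ref{main} for the non-concentration condition \eqref{nonConc} --- is correct.

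One small correction to your $L^{2}$ discussion: the regime $\kappa > s$ in which you declare the energy argument successful is in fact vacuous. Taking $t = \delta$ and $x \in E$ in \eqref{nonConc} gives
\begin{equation*}
1 \leq |E \cap B(x,\delta)| \lesssim \delta^{\kappa}\,|E| \sim \delta^{\kappa - s},
\end{equation*}
which forces $\kappa \leq s$ (up to the implicit constant) for the hypotheses to be compatible at all, so the $L^{2}$ bound never beats Kaufman's \eqref{discreteKaufman}, not merely ``in the genuinely hard regime.'' The rest of your sketch --- recasting the problem as a $p$-fold tube-incidence estimate, the hope that polynomial partitioning together with \eqref{nonConc} could produce a gain, and the observation that this is morally equivalent to improving \eqref{kaufman} and hence wide open --- is a fair summary of the state of the art, and consistent with the paper's own framing of the conjecture as a motivating open problem rather than a theorem.
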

The conjecture is true, and due to Bourgain, if $s$ is sufficiently close to $1/2$; in this case, one can also drop the \emph{a priori} assumption $|E| \sim \delta^{-s}$, because \eqref{nonConc} alone guarantees that $E$ contains enough directions, see \cite[Theorem 3]{Bo}. Progress in Conjecture \ref{mainConjecture} for a certain $s \in (1/2,1)$ would, most likely, lead to an improvement for the Hausdorff dimension estimate \eqref{kaufman} for the same $s$.

The first main result of the present paper is a variant of the conjecture, where the non-concentration hypothesis \eqref{nonConc} is replaced by the requirement that the vectors in $E$ be $r$-separated for some $\delta \leq r \leq 1$:

\begin{thm}\label{main} Let $K \subset B(0,1) \subset \R^{2}$ be a compact set with $\calH_{\infty}^{1}(K) \gtrsim 1$, and let $1/2 \leq s < 1$ and $\delta \leq r \leq 1$. Then
\begin{displaymath} N(E_{s},r) \lessapprox \min\left\{\delta^{-s}\left(\frac{\delta}{r}\right)^{1/2},\frac{1}{r}\right\}. \end{displaymath}
The exponents in the bound are sharp for $\delta \leq r \leq \delta^{s}$.
\end{thm}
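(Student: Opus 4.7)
The $1/r$ bound in the minimum is immediate from $E_{s} \subset S^{1}$, so it suffices to establish $N(E_{s}, r) \lessapprox \delta^{-s}(\delta/r)^{1/2}$. Let $E \subset E_{s}$ be a maximal $r$-separated subset of cardinality $N$; the aim is $N \lessapprox \delta^{-s}(\delta/r)^{1/2}$. Frostman's lemma applied to $\mathcal{H}_{\infty}^{1}(K) \gtrsim 1$ furnishes a probability measure $\mu$ on $K$ with $\mu(B(x,t)) \leq t$ for all $x \in \R^{2}$, $t > 0$. For each $e \in E$ the covering hypothesis $N(\pi_{e}(K), \delta) \leq \delta^{-s}$ yields, by pigeonholing mass over the $\leq \delta^{-s}$ covering slabs, a $\delta$-tube $T_{e}$ perpendicular to $e$ with $\mu(T_{e}) \gtrsim \delta^{s}$. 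The problem reduces to bounding the number of such $\delta$-tubes in $r$-separated directions.

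The first-order $L^{2}$ count, applied to $f = \sum_{e} \mathbf{1}_{T_{e}}$, combines $\int f\,d\mu \gtrsim N \delta^{s}$ with the Frostman intersection bound $\mu(T_{e} \cap T_{e'}) \lesssim \delta/|e-e'|$ (valid because $T_{e} \cap T_{e'}$ is a parallelogram of diameter $\lesssim \delta/|e-e'|$) and the $r$-separation sum $\sum_{e \neq e'} 1/|e-e'| \lessapprox N/r$, producing only $N \lessapprox \delta^{-2s}$; this bound is insensitive to $r$ and already weaker than Kaufman's at $r = \delta$. To recover the missing $(\delta/r)^{1/2}$ factor I would invoke polynomial partitioning: choose $P \in \R[x,y]$ of degree $D$ (to be optimized) so that $\R^{2} \setminus Z(P)$ decomposes into $\lesssim D^{2}$ open cells of $\mu$-mass $\lesssim D^{-2}$, and classify each $T_{e}$ as \emph{cellular} (majority of $\mu(T_{e})$ in the cells) or \emph{algebraic} (majority of $\mu(T_{e})$ inside the $\delta$-neighborhood of $Z(P)$). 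Cellular tubes are controlled by the cell-mass budget together with the fact that each $T_{e}$ visits at most $D+1$ cells; algebraic tubes are controlled by rerunning the $L^{2}$ argument on the $\delta$-neighborhood of $Z(P)$, whose total $\mu$-mass is only $\lesssim D$ because $Z(P) \cap B(0,1)$ has length $\lesssim D$ and hence its $\delta$-neighborhood can be covered by $\lesssim D/\delta$ balls of radius $\delta$, each of $\mu$-mass $\leq \delta$. Optimizing $D$ to balance the cellular and algebraic contributions is what is designed to deliver $N \lessapprox \delta^{-s}(\delta/r)^{1/2}$.

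The most delicate step is the algebraic case, because the reduced $\mu$-mass budget on the neighborhood of $Z(P)$ is not by itself enough: one must separately prevent many $r$-separated tubes from hugging the same component of $Z(P)$. This is where $r$-separation enters substantively, via a B\'ezout/Gauss-map estimate: for fixed $e$, the locus $\{x \in Z(P) : \nabla P(x) \perp e\}$ is the intersection of $Z(P)$ with another algebraic curve of degree $\lesssim D$, hence has $\lesssim D$ points, which caps the number of $\delta$-tubes of direction $e$ that can hug $Z(P)$; $r$-separation then caps the total number of "flat" directions by $\lesssim D/r$. For the sharpness claim in the range $\delta \leq r \leq \delta^{s}$, I would construct $K$ as a union of $\sim \delta^{-s}(\delta/r)^{1/2}$ short line segments of length $\sim (r/\delta)^{1/2}\delta^{s}$ placed tangentially to a fixed polynomial curve (e.g.\ a parabolic arc), rescaled so that $\mathcal{H}_{\infty}^{1}(K) \sim 1$; the tangent directions along the supporting curve at the segment locations should then furnish the extremal family in $E_{s}$, saturating the bound at both endpoints $r = \delta$ (recovering Kaufman's line-segment example) and $r = \delta^{s}$.
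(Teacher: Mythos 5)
Your overall strategy is a sensible ``dual'' of the paper's: where the paper discretises $K$ to a $(\delta,1)$-set $P$, keeps \emph{all} $\leq\delta^{-s}$ covering tubes per direction, and counts point--tube incidences $I(P,\calT)$, you keep a Frostman measure $\mu$ and a single heavy tube $T_e$ per direction, counting $L^2$ mass. These two formulations carry comparable information, and your cellular step can plausibly be run in parallel with the paper's use of Proposition~\ref{cellIncidences} cell by cell, provided you also carry out the analogue of the paper's preliminary step of discarding cells that are too light (the paper needs $|\tilde{P}\cap O_i|\geq\delta^{-\tau}$ so that the $\sqrt{\delta^{-\tau}|\tilde{P}\cap O_i||\calT^i|}$ error is absorbed; you would need a matching mass threshold per cell). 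The genuine gap is in the algebraic case, and it is fatal as written.

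First, the mass budget is vacuous. Covering the $\delta$-neighbourhood of a degree-$D$ curve in $B(0,1)$ by $\lesssim D/\delta$ balls of radius $\delta$, each of $\mu$-mass $\leq\delta$ by Frostman, gives $\mu(Z(\delta))\lesssim D$, not $\lesssim D\delta$. Since $\mu$ is a probability measure and $D\geq 1$, this says nothing; for $K$ a unit segment with $Z$ the containing line (degree $D=1$) one has $\mu(Z(\delta))=1$ for every $\delta$. So there is no reduced budget, and rerunning the $L^2$ argument on $Z(\delta)$ gains no factor over the global $L^2$ count you already observed is insufficient. Second, the Gauss-map/B\'ezout idea does not plug the hole: a tube $T_e$ can carry $\mu(T_e\cap Z(\delta))\gtrsim\delta^s$ without $Z$ ever being tangent to the axis of $T_e$ inside the tube --- the mass can come from $\sim\delta^{s-1}$ transversal crossings of $Z$ through $T_e$, or from a single tiny component of $Z$ trapped in one $\delta$-ball. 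Neither is detected by $\{x\in Z(P):\nabla P(x)\perp e\}$ (which, in any case, is an intersection of two degree-$\leq D$ curves and so has $\lesssim D^{2}$ points by B\'ezout, not $\lesssim D$, and there is no clean step from that to a $\lesssim D/r$ cap on ``flat directions''). The paper's non-cellular argument is of a different nature: tiny components are disposed of once and for all using Harnack's curve theorem (at most $\lesssim D^{2}$ components of $Z$), and for the remaining points one fixes a heavy tube $T_0$ in a direction $e_1\in E_s$ with $\gtrsim\delta^{s-1}$ points, shoots a random line parallel to $T_0$, and compares the expected number of its intersections with $Z$, which is $\gtrsim\delta^{s+\tau-1}|E_s|$, against the B\'ezout cap of $D$. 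That comparison, not a Gauss-map bound, is what produces $|E_s|\lesssim D\delta^{1-s-\tau}$; your sketch has no mechanism of comparable strength.

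The sharpness example also looks off. Placing $\sim\delta^{-s}(\delta/r)^{1/2}$ segments of length $(r/\delta)^{1/2}\delta^{s}$ tangentially along a fixed convex arc does give total length $\sim 1$, but projecting along a direction $e$ that collapses one tangent segment still spreads the others: a segment $L_{e'}$ in direction $e'$ contributes an interval of length $\sim|e-e'|(r/\delta)^{1/2}\delta^{s}$ to $\pi_{e}(K)$, and summing over $r$-separated $e'$ in a unit arc gives $\sim\delta^{-1}$ intervals of length $\delta$ unless there is massive overlap among the projected segments. The construction in Section~\ref{sharpness} takes $K=G+L$ with $G$ a carefully chosen grid precisely so that Lemma~\ref{gridProjections} forces that overlap; a generic polynomial arc gives you no reason to expect it.
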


\begin{remark} An equivalent formulation of Theorem \ref{main} -- more reminiscent of Conjecture \ref{mainConjecture} -- is the following: if $|E| \sim \delta^{-s}$, and the separation between the vectors in $E$ is at least $r \geq \delta$, then $N(\pi_{e}(K),\delta) \gtrapprox \delta^{-s}(r/\delta)^{1/2}$ for some $e \in E$. Assuming that $\delta \leq r < \delta^{s + \epsilon}$, a set $E$ satisfying these hypotheses can be found inside an arc of length $\sim \delta^{\epsilon}$, and such an $E$ naturally cannot satisfy the non-concentration hypothesis \eqref{nonConc} with $t = \delta^{\epsilon}$. So, in fact, the separation assumption in Theorem \ref{main} is neither weaker nor stronger than \eqref{nonConc}, and in particular Theorem \ref{main} gives new information even in the "$s$ is close to $1/2$" regime, which does not follow from Bourgain's paper \cite{Bo}. The proof of Theorem \ref{main} is based on the "polynomial method" developed by Dvir, Guth and Katz, and I do not know how -- or if -- this technique can be combined with the non-concentration hypothesis \eqref{nonConc}. 

The case $s < 1/2$ is systematically ignored in this paper, because the corresponding results in that range are quite straightforward. Finally, it seems plausible that the exponents in Theorem \ref{main} are sharp for $\delta \leq r \leq 1$, but the family of counter-examples in Section \ref{sharpness} currently requires $\delta \leq r \leq \delta^{s}$ to work.  \end{remark}

The second main result, Theorem \ref{main2} below, is directly motivated by Bourgain's proof of \cite[Theorem 3]{Bo} (which is essentially Conjecture \ref{mainConjecture} for $s$ close enough to $1/2$, and without the assumption $|E_{s}| \sim \delta^{-s}$). Here is a \emph{prestissimo} explanation of some parts of \cite{Bo}. If the result were not true, then for arbitrarily small $\epsilon,\delta > 0$, one can find a set $K$ as in Conjecture \ref{mainConjecture}, and three vectors $e_{1},e_{2},e_{3} \in S^{1}$ with separation $\sim 1$, such that $N(\pi_{e_{i}}(K),\delta) \leq \delta^{-1/2 - \epsilon}$ for $i \in \{1,2,3\}$. This counter-assumption can be used to extract strong structural information about $K$: in particular, $K$ is quantitatively \textbf{not} $1$-Ahlfors-David regular (for the definition, see Section \ref{ADR}). In the second part of the proof of \cite[Theorem 3]{Bo}, the structural information is applied to show that $K$ must, after all, have plenty of reasonably big projections. 

A major (but not the only) obstacle in applying Bourgain's method to Conjecture \ref{mainConjecture} is that the same structural conclusions cease to hold, if one replaces the assumption 
\begin{displaymath} N(\pi_{e_{i}}(K),\delta) \leq \delta^{-1/2 - \epsilon}, \qquad i \in \{1,2,3\}, \end{displaymath}
by 
\begin{displaymath} N(\pi_{e_{i}}(K),\delta) \leq \delta^{-s}, \qquad i \in \{1,2,3\}, \end{displaymath}
for some $s < 1$, possibly very close to $1$. Indeed, the $1$-dimensional four corners Cantor set $K$ is $1$-Ahlfors-David regular with very modest constants, yet it has three well-separated projections $\pi_{e_{i}}(K)$ (vertical, horizontal and $45^{\circ}$) such that $N(\pi_{e_{i}}(K),\delta) \lesssim \delta^{-s}$ with $s = \log 3/\log 4 < 1$. 

So, three directions are not enough, but how about a million? More precisely: fix $s < 1$, and assume that $N(\pi_{e}(K),\delta) \leq \delta^{-s}$ for, say, $p(s) \in \N$ well-separated vectors $e \in S^{1}$. Is it, then, true that $K$ cannot be $1$-Ahlfors-David regular with bounded constants? A positive answer to this question is the content of the second main theorem:
\begin{thm}\label{main2} Given $1/2 \leq s < 1$ and $A > 0$, there are numbers $p = p(s,A) \in \N$ and $\delta(A,s) > 0$ with the following property. Let
\begin{displaymath} S_{p} := \{e^{2\pi i k/p} : 0 \leq k < p\} \subset S^{1}, \end{displaymath}
and let $\emptyset \neq K \subset B(0,1)$ be a $1$-Ahlfors-David regular set with $\calH^{1}(K) \sim 1$ and regularity constant at most $A$. Then
\begin{displaymath} \frac{1}{p} \sum_{p \in S_{p}} N(\pi_{e}(K),\delta) \geq \delta^{-s}, \qquad 0 < \delta \leq \delta(A,s). \end{displaymath}
In particular, $\overline{\dim}_{\textup{B}} \pi_{e}(K) \geq s$ for some $e \in S_{p}$.
\end{thm}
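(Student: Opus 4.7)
The plan is to argue by contradiction and reduce the general $s\in[1/2,1)$ problem to the three-direction, $s\approx 1/2$ case handled by the ``first half'' of Bourgain's \cite[Theorem~3]{Bo}, as recalled in the excerpt. Assume, towards contradiction, that for arbitrarily large $p$ and arbitrarily small $\delta>0$ there is a non-empty $1$-ADR set $K\subset B(0,1)$ with regularity constant at most $A$ satisfying $p^{-1}\sum_{e\in S_p}N_e<\delta^{-s}$, where $N_e:=N(\pi_eK,\delta)$. Markov then yields $E_0\subset S_p$ with $|E_0|\ge 9p/10$ and $N_e\le 10\delta^{-s}$ for $e\in E_0$. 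The boundary case $s=1/2$ is immediate: pigeonhole gives three $\gtrsim 1$-separated vectors in $E_0$, and Bourgain rules out $N_{e_i}\le 10\delta^{-1/2}$ for all three as soon as $\delta<\delta_0(A)$. I therefore focus on $s\in(1/2,1)$.

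The next step is an ADR \emph{rescaling} designed to push $s$ down to a value just above $1/2$. Fix a small $\eta\in(0,s-\tfrac12)$ (to be chosen below $\epsilon_0(A)$, with $\epsilon_0(A)$ the constant coming from Bourgain's theorem for $1$-ADR sets of constant $A$), and set
\[ a:=\frac{s-\tfrac12-\eta}{\tfrac12-\eta}\in(0,1),\qquad \rho:=\delta^a,\qquad \tilde\delta:=\delta/\rho=\delta^{1-a},\qquad \tilde s:=\frac{s-a}{1-a}=\tfrac12+\eta, \]
chosen so that $\rho\cdot\delta^{-s}=\tilde\delta^{-\tilde s}$. The crucial step is to produce $x\in K$ and $E_1\subset E_0$ with $|E_1|\gtrsim_{A,s,\eta}p$ such that the local projection count satisfies
\[ n_e(x,\rho):=N(\pi_e(K\cap B(x,\rho)),\delta)\ \lesssim\ \rho N_e\ \le\ 10\tilde\delta^{-\tilde s},\qquad e\in E_1. \]
Granting this, the rescaled set $\tilde K:=(K-x)/\rho\cap B(0,1)$ is $1$-ADR with comparable constant (since ADR is scale-invariant) and $\mathcal{H}^1(\tilde K)\sim 1$ (by the lower ADR bound), and satisfies $N(\pi_e\tilde K,\tilde\delta)\lesssim\tilde\delta^{-1/2-\eta}$ on $E_1$.

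I would then take $p$ large enough (depending on $A,s,\eta$) that $|E_1|$ exceeds any fixed constant, and extract three vectors $e_1,e_2,e_3\in E_1$ with pairwise angular separation $\gtrsim 1$ by a simple pigeonhole on $S_p$. Bourgain's theorem applied to the $1$-ADR set $\tilde K$ and these three directions yields $\max_i N(\pi_{e_i}\tilde K,\tilde\delta)\ge\tilde\delta^{-1/2-\epsilon_0(A)}$ once $\tilde\delta$ is small enough. Choosing $\eta<\epsilon_0(A)/2$ from the outset contradicts the upper bound above, proving the sum estimate. The in-particular statement $\overline{\dim}_{\textup{B}}\pi_eK\ge s$ for some $e\in S_p$ follows by noting $\max_eN_e\ge p^{-1}\sum_eN_e\ge\delta^{-s}$ and letting $\delta\to 0$.

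The hard part will be the pigeonhole producing $x$ and $E_1$ in the rescaling step. The two easy bounds are both inadequate: $n_e(x,\rho)\le N_e$ loses the essential factor $\rho$, while Fubini against $\mu=\mathcal{H}^1|_K$,
\[ \int_K n_e(x,\rho)\,d\mu(x)\ \le\ \sum_{j}\mu(T_j^e)\ \lesssim\ \rho/\delta\qquad\text{with}\qquad T_j^e:=\pi_e^{-1}(I_j+[-\rho,\rho]), \]
is only the trivial estimate and is weaker than the desired $\rho N_e$ whenever $s<1$. Closing the gap has to exploit the $1$-ADR property at the intermediate scale $\rho$: one must argue, for the bulk of $e\in E_0$ taken collectively, that the $\rho$-thickened $\delta$-tubes $T_j^e$ cannot all carry anomalously large $\mu$-mass. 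I expect this to be the technical heart of the proof, to be handled by an averaged projection estimate for the $1$-ADR measure $\mu$ (controlling the ``typical'' tube mass) together with a discard of a small ``bad'' subset of $E_0$--affordable because the surplus $|E_0|/p\ge 9/10$ leaves room, and because $p$ is allowed to grow as large as needed relative to $A$ and $s$.
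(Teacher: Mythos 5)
Your proposal takes a genuinely different route from the paper. The paper does not argue by contradiction via rescaling to the $s\approx 1/2$ regime and Bourgain's structure theorem; instead it runs an entropy argument: a multi-scale decomposition of entropy (Lemma \ref{multiScale}, after Hochman), an $L^2$--based ``entropy Marstrand'' estimate for linear-growth measures (Proposition \ref{entropyMarstrand}, Corollary \ref{entropyCor}), and a conversion from entropy to covering numbers (Lemma \ref{entropyAndCovering}). The crucial structural feature is that Shannon entropy is additive across scales (Proposition \ref{CEF}), which is exactly what makes the ``zoom in, apply a one-scale estimate, aggregate'' step legitimate. Covering numbers lack this additivity, and your plan is, in effect, trying to reproduce the additivity by hand.

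That said, the gap you flag -- producing $x$ and $E_1$ with $n_e(x,\rho)\lesssim\rho N_e$ for a large fraction of $e\in E_0$ -- is genuine and, I believe, not closable by the kind of ``averaged projection estimate'' you sketch. The obstruction is that $1$-ADR gives no pointwise control of the mass of a $\rho$-tube $T$: for $\mu=\calH^1|_K$ one only has $\mu(T)\lesssim A$, and the tubes covering $\pi_e(K)$ can concentrate essentially all of $\mu$ on $\lesssim\rho/\delta$ heavy tubes. Since $\rho/\delta=\delta^{a-1}\gg\delta^{a-s}=\rho N_e$ (because $s<1$), heavy tubes alone can make $\sum_j\mu(T_j^e)$ far exceed $\rho N_e$, and then the Markov/Fubini step produces nothing. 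A concrete warning sign: for the $1$-ADR four-corners Cantor set at scales $\delta=4^{-n}$, $\rho=4^{-m}$ and the diagonal direction, $n_e(x,\rho)\approx 3^{n-m}$ and $\rho N_e\approx 4^{-m}3^n$, so $n_e(x,\rho)/(\rho N_e)\approx(4/3)^m\to\infty$. Of course that set is not a counterexample to the theorem (it has few bad directions), but it shows that for a single bad direction the local covering number need not scale down by the factor $\rho$; for a hypothetical counterexample $K$ with $|E_0|\ge 9p/10$ bad directions you would need this scaling-down simultaneously for a large fraction of $E_0$, and you have no mechanism to enforce it. The paper avoids the issue entirely: Lemma \ref{multiScale} expresses $H_n(\pi_{e\sharp}\mu)$ as a weighted average over all intermediate dyadic cubes and scales of $H_m(\pi_{e\sharp}\mu^Q)$, with no loss, so the one-scale Marstrand bound for the ADR blow-ups $\mu^Q$ can be inserted directly -- no pigeonholing for a ``good'' single ball, no heavy-tube problem. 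Also note a smaller issue: your rescaling fixes the intermediate scale $\rho=\delta^a$ in advance; even if the tube-mass estimate could be salvaged at some scale, it is unclear why the scale $\delta^a$ in particular should be the one at which it works, and the entropy approach sidesteps this by averaging over all intermediate scales $km$, $0\le k<\lfloor n/m\rfloor$.

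Finally, two smaller remarks. The blown-up set $\tilde K=(K-x)/\rho\cap B(0,1)$ is only ADR \emph{up to scale $1$} once restricted; this is mild but needs saying before invoking Bourgain. And the ``in particular'' conclusion ($\Bdim\pi_e(K)\ge s$ for some $e\in S_p$) needs a short pigeonhole over the finitely many $e$ along a subsequence $\delta_k\to 0$, as a single $\delta$ does not control a $\limsup$; that is a minor point but worth making explicit.
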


Above, $\overline{\dim}_{\textup{B}}$ is the upper box dimension, defined for bounded sets $A \subset \R^{d}$ by
\begin{displaymath} \overline{\dim}_{\textup{B}} A := \limsup_{\delta \to 0} \frac{\log N(A,\delta)}{-\log \delta}. \end{displaymath}

\begin{remark} The precise form of the vectors in $S_{p}$ is not too important for the argument: it is only needed that, for some weights $w_{e} \sim 1/p = |S_{p}|^{-1}$, the difference
\begin{displaymath} \left| \sum_{e \in S_{p}} w_{e} \cdot f(e) - \int_{S^{1}} f(\xi) \, d\sigma(\xi) \right| \end{displaymath} 
can be made arbitrarily small for all functions $f$ on $S^{1}$ with a reasonable modulus of continuity, depending on $A$ and $s$. A more general statement would also be more awkward to write down, however, so I chose not to pursue the topic. 

Another point is that there is no analogue of Theorem \ref{main2} for Hausdorff dimension. Indeed, given any countable collection of vectors $E \subset S^{1}$, it is straightforward to construct a $1$-Ahfors-David regular set such that $\dim \pi_{e}(K) = 0$ for all $e \in E$ (and indeed for all $e \in G$, where $G \supset E$ is a suitable $G_{\delta}$-set). For the details, see \cite[Theorem 1.5]{O}. 

It is a somewhat less trivial question, whether Ahlfors-David regularity is, in fact, necessary for Theorem \ref{main2}. For instance: given $s < 1$, is it possible to find a finite set $E_{s} \subset S^{1}$ such that $\overline{\dim}_{\textup{B}} \pi_{e}(K) \geq s$ for some $e \in E_{s}$, whenever $K \subset B(0,1)$ is a compact set with $\calH^{1}_{\infty}(K) \sim 1$? Most likely, the answer is negative. Given any $\epsilon > 0$ and any finite set $D \subset \R$, an example of B. Green -- which appears in \cite[Remark 2]{KL} -- can be modified to produce a finite set $A \subset \R$ with the property that $|A + tA| \leq |A|^{1 + \epsilon}$ for all $t \in D$. Then, it seems probable that a self-similar construction with $|A|^{2}$ homotheties (mapping $0$ to the points in $A \times A$, with contraction ratios $1/|A|^{2}$) produces a set $K \subset \R^{2}$ with $0 < \calH^{1}(K) < 1$, or at least $\dim K = 1$, such that $\overline{\dim}_{\textup{B}} \pi_{t}(K) \leq 1/2 + \epsilon$ for $t \in D$. Here $\pi_{t}(x,y) = x + ty$. 
\end{remark}

The rest of the paper is organised as follows. Section \ref{sharpness} discusses the sharpness of the bound in Theorem \ref{main}. Section \ref{Kaufman} reviews some basic concepts used in the proof of Theorem \ref{main}, and gives a quick -- and well-known -- argument for the discrete Kaufman bound \eqref{discreteKaufman}. The proof of Theorem \ref{main} is given in Section \ref{mainProof}, and Section \ref{ADR} contains the proof of Theorem \ref{main2}.

Some notational remarks: $B(x,r)$ stands for a closed ball of radius $r > 0$ and centre $x \in \R^{2}$. The side-length of a cube $Q \subset \R^{d}$ is denoted by $\ell(Q)$. The inequality $A \lesssim B$ means that $A \leq CB$ for an absolute constant $C > 0$; the two-sided inequality $A \lesssim B \lesssim A$ is abbreviated to $A \sim B$. As mentioned above, $A \lessapprox B$ means that $A \lesssim \log(1/\delta)B$. The Hausdorff measure of dimension $s$ is denoted by $\calH^{s}$, and Hausdorff content by $\calH^{s}_{\infty}$. Thus
\begin{displaymath} \calH^{s}_{\infty}(A) := \inf\left\{\sum_{i = 1}^{\infty} \diam(E_{i})^{s} : A \subset \bigcup_{i = 1}^{\infty} E_{i} \right\}. \end{displaymath}
For information about Hausdorff dimension or measures, upped box dimension, or any other geometric measure theoretic concept in the text, see Mattila's book \cite{Mat}. 

\section{Acknowledgements} 

Most of this research was conducted while I was visiting Prof. D. Preiss at the University of Warwick, and I am thankful for his hospitality. I also wish to thank A. M\'ath\'e for several discussions around the topics of the paper. The proof of the second main theorem was essentially completed while I was visiting M. Hochman at the Hebrew University of Jerusalem. I am grateful for his hospitality and many useful discussions, and in particular for pointing out the existence of Lemma \ref{multiScale}. 

\section{Some worst-case examples}\label{sharpness}

Fix $\delta > 0$, $1/2 \leq s < 1$ and $\delta \leq r \leq \delta^{s}$. The example showing the sharpness of Theorem \ref{main} with these parameters can be seen in Figure \ref{fig1}. 
\begin{figure}[h!]
\begin{center}
\includegraphics[scale = 0.5]{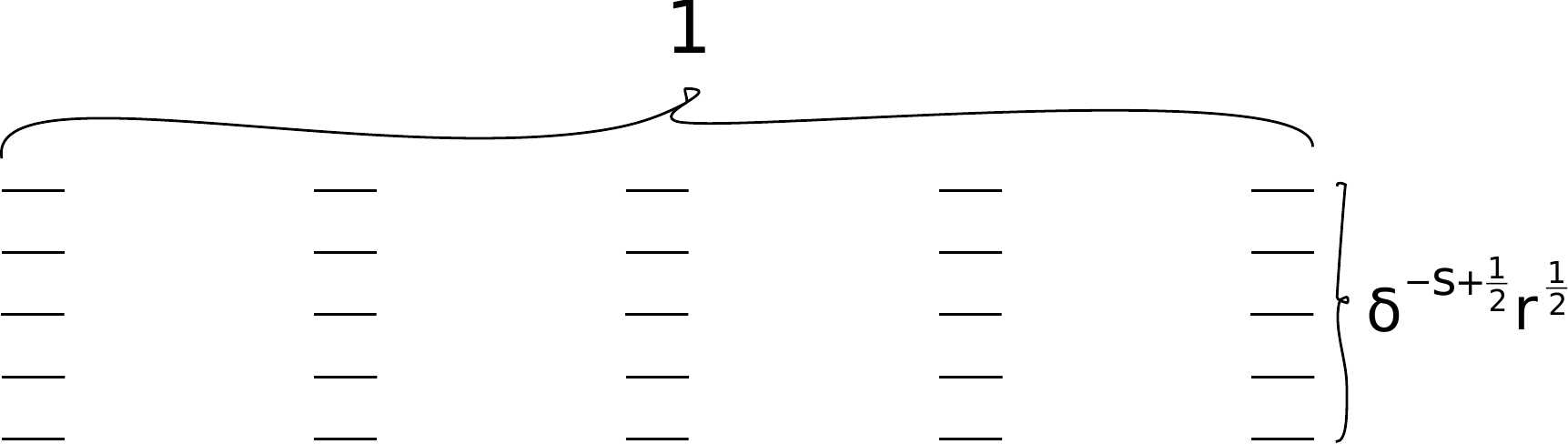}
\caption{The set in the figure contains $\delta^{-s - 1/2}r^{1/2}$ line segments of length $\delta^{s + 1/2}r^{-1/2}$ in a vertically squashed grid formation.}\label{fig1}
\end{center}
\end{figure}
To define the set precisely, let $m := \delta^{s - 1/2}r^{-1/2}$ and $n := \delta^{-2s}r$, and assume for convenience that these numbers are integers. Let 
\begin{displaymath} G := G_{1} \times G_{2} := \{k/m : 0 \leq  k \leq m - 1\} \times \{l/(mn) : 0 \leq l \leq n - 1\}. \end{displaymath}
The set $K$ is defined by $K := G + L$, where $L$ is the line segment $[0,h]$,
\begin{displaymath} h := \delta^{s}\left(\frac{\delta}{r}\right)^{1/2}. \end{displaymath} 

The first claim is that $\mathcal{H}^{1}_{\infty}(K) \sim 1$. Note that the gap of the "vertical" arithmetic progression $G_{2}$ is
\begin{displaymath} \frac{1}{mn} = \frac{1}{\delta^{s - 1/2}r^{-1/2}\delta^{-2s}r} = \delta^{s}\left(\frac{\delta}{r}\right)^{1/2} = h, \end{displaymath}
which equals the length of the line segments. Moreover, the gap of the "horizontal" progression $G_{1}$ is 
\begin{displaymath} \frac{1}{m} = \frac{n}{mn} \sim \diam(G_{2}). \end{displaymath}
In English, if the "vertical stacks" are rotated by $90$ degrees, they roughly fit inside the gaps of the progression $G_{2}$. It follows easily from these facts that the measure $\mathcal{H}^{1}|_{K}$ satisfies the Frostman inequality $\mathcal{H}^{1}|_{K}(B(x,r)) \lesssim r$ with absolute constants (in fact $K$ is even $1$-AD-regular), and hence $\mathcal{H}^{1}_{\infty}(K) \gtrsim 1$. 

To understand the projections of $K$, one needs the following easy lemma:
\begin{lemma}\label{gridProjections} Let that $A_{1},A_{2} \subset \R$ be arithmetic progressions and $e \in S^{1}$. If 
\begin{displaymath} |\pi_{e}^{-1}\{0\} \cap (A_{1} \times A_{2})| \geq p, \end{displaymath}
then $|\pi_{e}(A_{1} \times A_{2})| \leq 4|A_{1}||A_{2}|/p$.
\end{lemma}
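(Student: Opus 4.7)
The plan is to reduce the problem to an integer grid and then count fibers by a direct combinatorial argument. Let $d_i > 0$ denote the common difference of $A_i$, and write $N_i := |A_i|$. The affine bijection $T(x, y) := ((x - \min A_1)/d_1,\, (y - \min A_2)/d_2)$ sends $A_1 \times A_2$ onto $\{0, \ldots, N_1 - 1\} \times \{0, \ldots, N_2 - 1\}$ and takes lines to lines, preserving parallelism. Consequently, two points $p, q \in A_1 \times A_2$ have $p - q$ parallel to $\ell := \pi_e^{-1}\{0\}$ if and only if $T(p) - T(q)$ is parallel to $T(\ell)$, so the number of $\pi_e$-fibers inside $A_1 \times A_2$ coincides with the number of lines parallel to $T(\ell)$ that meet the integer grid. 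Hence I may assume from the outset that $A_i = \{0, \ldots, N_i - 1\}$ and that $\ell$ is an arbitrary line with $|(A_1 \times A_2) \cap \ell| \geq p$.

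Since $\ell$ contains at least two lattice points, its direction vector may be taken to be $(a, b)$ for some coprime pair of integers, and after reflecting the grid I may assume $a, b \geq 0$. The $p$ lattice points on $\ell$ then form an arithmetic progression with common difference $(a, b)$, so the first and last terms differ by $(p - 1)(a, b)$. Since both must lie in $[0, N_1 - 1] \times [0, N_2 - 1]$, this forces $(p - 1) a \leq N_1 - 1$ and $(p - 1) b \leq N_2 - 1$.

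Finally, every line parallel to $\ell$ has the form $\{(x, y) : b x - a y = c\}$, and such a line meets the integer grid only when $c$ is an integer with $-a(N_2 - 1) \leq c \leq b(N_1 - 1)$; this gives at most $a(N_2 - 1) + b(N_1 - 1) + 1$ non-empty fibers. Substituting the bounds on $a, b$ from the previous paragraph yields $\leq 2 N_1 N_2 /(p - 1) \leq 4 N_1 N_2 / p$ for $p \geq 2$, while the axis-parallel and $p = 1$ cases are immediate. The argument is really just careful bookkeeping, and I do not foresee any serious obstacle; the only mildly delicate point is verifying that the affine reduction preserves the \emph{number of projection fibers} (rather than the projection itself), but this follows immediately from the linearity of $T$.
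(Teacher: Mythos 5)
Your proof is correct, but it takes a genuinely different route from the one in the paper. The paper uses a short sumset argument in the spirit of Pl\"unnecke--Ruzsa: it picks $p$ solutions $(x_{j},y_{j})$ of $\pi_{e}(x,y) = 0$ in $A_{1}\times A_{2}$, and observes that for any $t \in \pi_{e}(A_{1}\times A_{2})$ with witness $(x^{t},y^{t})$, the $p$ translates $(x^{t}+x_{j}, y^{t}+y_{j})$ all lie in $\pi_{e}^{-1}\{t\} \cap (A_{1}' \times A_{2}')$ where $A_{i}' := A_{i} + A_{i}$; summing over $t$ and using $|A_{i}'| \leq 2|A_{i}|$ gives the bound immediately, with no case analysis and no need for the direction of $\pi_{e}^{-1}\{0\}$ to be rational. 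Your approach instead normalises $A_{1}\times A_{2}$ to the integer grid $\{0,\dots,N_{1}-1\}\times\{0,\dots,N_{2}-1\}$ via an affine bijection, notes that only the number of lines in the pencil parallel to $\pi_{e}^{-1}\{0\}$ hitting the grid matters (which is preserved under the affine map), and then counts those lines directly: the direction vector is a coprime integer pair $(a,b)$, the $p$ collinear lattice points force $(p-1)a \leq N_{1}-1$ and $(p-1)b \leq N_{2}-1$, and the pencil has at most $a(N_{2}-1) + b(N_{1}-1) + 1$ non-empty members. Your final algebra and the absorption of the $+1$ do check out (using $p \leq \min(N_{1},N_{2})$ in the non-axis-parallel case, with $p \leq 1$ and axis-parallel cases trivial). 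The trade-off is that the paper's argument is shorter and extends verbatim to any sets with doubling $|A_{i}+A_{i}| \leq 2|A_{i}|$, whereas your lattice argument is more explicit geometrically but is genuinely tied to arithmetic progressions and requires the casework you flag. One small imprecision worth tightening in a final write-up: the first and last of the $p$ lattice points differ by $k(a,b)$ for some integer $k \geq p-1$, not necessarily exactly $(p-1)(a,b)$, though this only strengthens the inequalities you need.
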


\begin{proof} The hypothesis means that $\pi_{e}(x,y) = 0$ has at least $p$ solutions $(x_{1},y_{2}),\ldots,(x_{p},y_{p}) \in A_{1} \times A_{2}$. Now, fix any $t \in \pi_{e}(A_{1} \times A_{2})$. Then $\pi_{e}(x^{t},y^{t}) = t$ for some $(x^{t},y^{t}) \in A_{1} \times A_{2}$, but also
\begin{displaymath} \pi_{e}(x^{t} + x_{j},y^{t} + y_{j}) = t, \qquad 1 \leq j \leq p.  \end{displaymath}
The points $x^{t} + x_{j}$ are contained in $A_{1}' := A_{1} + A_{1}$, and the points $y^{t} + y_{j}$ are contained in $A_{2}' := A_{2} + A_{2}$. Thus, for every $t \in \pi_{e}(A_{1} \times A_{2})$, the intersection $(A_{1}' \times A_{2}') \cap \pi_{e}^{-1}\{t\}$ contains at least $p$ elements. This gives
\begin{displaymath} |\pi_{e}(A_{1} \times A_{2})| \leq \frac{|A_{1}' \times A_{2}'|}{p} \leq \frac{4|A_{1}||A_{2}|}{p}, \end{displaymath}
as claimed. \end{proof}

Now, it is time to define a set of slopes $e \in S^{1}$ such that $\pi_{e}(G)$ -- and also $\pi_{e}(K)$ -- is small. For $1 \leq k \leq \delta^{s}/r$, consider the line spanned by the origin and any point of the form $(k/m,l/(mn)) \in G$ with $0 \leq l \leq k \cdot \delta^{-3s + 1/2}r^{3/2}$. The slopes of such lines form the set
\begin{displaymath} S := \left\{\frac{l}{kn} : 1 \leq k \leq \delta^{s}/r \text{ and } 0 \leq l \leq k \cdot \delta^{-3s + 1/2}r^{3/2}\right\}. \end{displaymath} 
It will eventually be shown that $\pi_{e}(K)$ is small for every $e$ perpendicular to a line with slope in $S$, but one first needs to analyse $S$ a bit. First, assume that two elements of $S$ are closer than $r$ apart, say 
\begin{displaymath} \left|\frac{l_{1}}{k_{1}n} - \frac{l_{2}}{k_{2}n} \right| < r. \end{displaymath}
Then, recalling that $n = \delta^{-2s}r$, and that $k_{1},k_{2} \leq \delta^{s}/r$,
\begin{displaymath} |l_{1}k_{2} - l_{2}k_{1}| < r nk_{1}k_{2} \leq r\delta^{-2s} r \delta^{2s}/r^{2} = 1, \end{displaymath}
which forces $l_{1}k_{2} - l_{2}k_{1} = 0$, since $l_{1}k_{2} - l_{2}k_{1} \in \Z$. In other words,
\begin{equation}\label{form16}  \left|\frac{l_{1}}{k_{1}n} - \frac{l_{2}}{k_{2}n} \right| < r \quad \Longrightarrow \quad \frac{l_{1}}{k_{1}n} = \frac{l_{2}}{k_{2}n}, \end{equation}
and hence the slopes in $S$ are $r$-separated. 

The next observation is that $|S| \gtrsim \delta^{-s}(\delta/r)^{1/2}$ (which is incidentally the bound from Theorem \ref{main}). To prove this, fix a small constant $c > 0$, and let $\calS_{c}$ consist of those pairs $(k,l) \in \N \times \N$ such that
\begin{equation}\label{form17} 1 \leq k \leq \frac{\delta^{s}}{r} \quad \text{and} \quad 0 \leq l \leq k \cdot \delta^{-3s + 1/2}r^{3/2}, \end{equation}
and $l/k \neq l'/k'$ for any $1 \leq |k'| \leq c\delta^{s}/r$ and $|l'| \leq 2\delta^{-2s + 1/2}r^{1/2}$ (the letters $k'$ and $l'$ also stand for integers). So, roughly speaking, one is considering those slopes in $S$, which cannot (also) be expressed as slopes $l'/(k'n)$ with a small denominator $k'$. Then $|\calS_{c}| \gtrsim \delta^{-s}(\delta/r)^{1/2}$, if $c > 0$ is small enough, since the total number of pairs $(k,l)$ respecting \eqref{form17} is $\gtrsim \delta^{-s}(\delta/r)^{1/2}$, whereas the number of those satisfying $0 < |k'| \leq c\delta^{s}/r$ and $|l'| \leq 2\delta^{-2s + 1/2}r^{1/2}$ is only $\lesssim c\delta^{-s}(\delta/r)^{1/2}$. 

It will now be shown that the mapping $(k,l) \mapsto l/k$ restricted to $\calS_{c}$ is $C$-to-$1$ for some absolute $C \geq 1$ (depending on $c$): this will of course prove that $|S| \gtrsim \delta^{-s}(\delta/r)^{1/2}$, as desired. Assume that $(k_{1},l_{1}),\ldots,(k_{C},l_{C}) \in \calS_{c}$ are distinct pairs satisfying $l_{i}/k_{i} = l_{j}/k_{j}$. Then also
\begin{equation}\label{form18} \frac{l_{i}}{k_{i}} = \frac{l_{i} - l_{j}}{k_{i} - k_{j}} = \frac{l_{j}}{k_{j}}, \qquad 1 \leq i,j \leq C. \end{equation}
Since $|k_{i} - k_{j}| \leq 2\delta^{-s}/r$, one can pigeonhole a pair of pairs $(k_{i},l_{i}),(k_{j},l_{j})$ with $0 < |k_{i} - k_{j}| \lesssim 2\delta^{-s}/(Cr)$, and in particular $1 \leq |k_{i} - k_{j}| \leq c\delta^{s}/r$ for large enough $C$. Since also $|l_{i} - l_{j}| \leq 2\delta^{-2s + 1/2}r^{1/2}$ for any pair of indices $i,j$, one sees from \eqref{form18} and the definition of $\calS_{c}$ that in fact neither of the pairs $(k_{i},l_{i})$ and $(k_{j},l_{j})$ can lie in $\calS_{c}$. This gives an upper bound for $C$, and the proof of $|S| \gtrsim \delta^{-s}(\delta/r)^{1/2}$ is complete.

To sum up the progress so far, one has found a set $K$ with $\calH_{\infty}^{1}(K) \sim 1$, and a set of $r$-separated slopes $S$ with cardinality $|S| \gtrsim \delta^{-s}(\delta/r)^{1/2}$. It remains to prove that whenever $e \in S^{1}$ is perpendicular to a line with slope in $S$, the projection $\pi_{e}(K)$ can be covered by $\lesssim \delta^{-s}$ intervals of length $\delta$. This uses Lemma \ref{gridProjections}, as the plan is to prove first that $|\pi_{e}(G)| \lesssim \delta^{-s}$. Fix a slope $l/(kn) \in S$, with $1 \leq k \leq \delta^{s}/r$ and $0 \leq l \leq k \cdot \delta^{-3s + 1/2}r^{3/2}$, and consider the line $\ell$ passing the origin and $(k/m,l/(mn)) \in G$. Then, for $j \in \N$, one has
\begin{displaymath} \left(\frac{jk}{m},\frac{jl}{mn} \right) \in G \cap \ell, \end{displaymath}
as long as $jk \leq m - 1 = \delta^{s - 1/2}r^{-1/2} - 1$ and $jl \leq n - 1 = \delta^{-2s}r - 1$. One checks from the restraints on $k$ and $l$ in the definition of $S$ that this holds if $j \leq (r/\delta)^{1/2}/2$. Consequently, $|\ell \cap G| \gtrsim (r/\delta)^{1/2}$ and hence, if $e$ is perpendicular to $\ell$ (then $\ell = \pi^{-1}_{e}\{0\}$), Lemma \ref{gridProjections} tells us that
\begin{displaymath} |\pi_{e}(G)| \lesssim \frac{|G|}{(r/\delta)^{1/2}} = \frac{\delta^{s - 1/2}r^{-1/2}\delta^{-2s}r}{(r/\delta)^{1/2}} = \delta^{-s}. \end{displaymath}

Now, the very final step is to check that $\pi_{e}(K) = \pi_{e}(G + L)$ is contained in the $\sim \delta$-neighbourhood fo $\pi_{e}(G)$, and this follows from the fact that $\pi_{e}(L)$ is an interval of length $\lesssim \delta$, whenever $e$ is perpendicular to a line with slope in $S$. Simply observe that the slopes in $S$ satisfy 
\begin{displaymath} \frac{l}{kn} \leq \frac{\delta^{-3s + 1/2}r^{3/2}}{\delta^{-2s}r} = \delta^{-s + 1/2}r^{1/2}, \end{displaymath}
and recall that $L$ is a horizontal line segment of length $h = \delta^{s}(\delta/r)^{1/2}$. By elementary trigonometry, the length of $\pi_{e}(L)$ is roughly $l/(kn) \cdot h \leq \delta$. 

\section{Basic concepts and Kaufman's bound}\label{Kaufman}

The proof of the upper bound in Theorem \ref{main} begins in this section. Fix the parameters $\delta,r,s$, and choose $\tau \in [0,1]$ so that $r = \delta^{\tau}$. The task is to estimate $N(E_{s},r) = N(E_{s},\delta^{\tau})$ from above, which is equivalent to bounding the cardinality of a maximal $\delta^{\tau}$-separated subset of $E_{s}$ from above. With this in mind, and from this point on, assume that $E_{s}$ is a $\delta^{\tau}$-separated subset of $\{e \in S^{1} : N(\pi_{e}(K),\delta) \leq \delta^{-s}\}$.

It is also convenient to discretise the set $K$ at the scale $\delta$. The following definition is essentially due to Katz and Tao \cite{KT}:

\begin{definition}[$(\delta,1)$-sets] A finite set $P \subset \R^{2}$ is called a $(\delta,1)$-set, if $P$ is $\delta$-separated, and
\begin{displaymath}  |P \cap B(x,r)| \lesssim \frac{r}{\delta}, \qquad x \in \R^{2}, \: r \geq \delta. \end{displaymath}
Here $|\cdot |$ means cardinality.
\end{definition}

\begin{lemma}\label{discretisation} Let $\delta > 0$, and let $K \subset \R^{2}$ be a set with $\calH^{1}_{\infty}(K) =: \kappa > 0$. Then, there exists a $(\delta,1)$-set $P \subset K$ with $|P| \gtrsim \kappa \cdot \delta^{-1}$.
\end{lemma}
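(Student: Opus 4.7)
The plan is to select $P$ by a dyadic-tree argument that exploits $\calH^{1}_{\infty}(K) \geq \kappa$ directly, via a max-flow/min-cut estimate. First I would partition $\R^{2}$ into half-open dyadic cubes of side $\delta$ and let $\calD^{*}$ be the sub-family of cubes intersecting $K$. The aim is to extract a sub-collection $\calR \subset \calD^{*}$ of cardinality $|\calR| \gtrsim \kappa/\delta$ satisfying the hierarchical constraint $|\{Q \in \calR : Q \subset Q'\}| \leq C \ell(Q')/\delta$ for every dyadic cube $Q'$ with $\ell(Q') \in [\delta,1]$. Once $\calR$ is fixed, I would pick one point $p_{Q} \in Q \cap K$ per $Q \in \calR$ and pass to a single colour class of a $3\times 3$ colouring of the dyadic grid, which enforces $\delta$-separation of the selected $p_{Q}$'s at the cost of an absolute constant factor in cardinality.

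The core step is the existence of $\calR$, which I would obtain from max-flow/min-cut on the dyadic tree rooted at a cube of side $\sim 1$ containing $B(0,1)$, with leaves $\calD^{*}$ and with capacity $C \ell(Q)/\delta$ at each (internal or leaf) node $Q$. Any cut in this tree yields a family $\{Q_{i}\}$ of dyadic cubes covering $K$, so $\sum_{i} \diam(Q_{i}) \geq \calH^{1}_{\infty}(K) \geq \kappa$; since dyadic cubes have diameter $\sqrt{2}\,\ell(Q)$, this gives $\sum_{i} \ell(Q_{i}) \geq \kappa/\sqrt{2}$, hence every cut has value $\geq C\kappa/(\sqrt{2}\delta)$. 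By max-flow/min-cut, a feasible $\calR$ of size $\gtrsim \kappa/\delta$ exists. The hierarchical constraint then transfers to balls: any $B(x,r)$ with $\delta \leq r \leq 1$ is contained in $O(1)$ dyadic cubes of side $\sim r$, so $|P \cap B(x,r)| \lesssim r/\delta$, completing the $(\delta,1)$-set property.

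The main obstacle is mostly bookkeeping: the $\delta$-separation of the $p_{Q}$'s is handled by the colouring step, and the max-flow/min-cut step is routine once the capacities are set up correctly. A purely measure-theoretic alternative would apply Frostman's lemma for Hausdorff content to obtain a measure $\mu$ on $K$ with $\mu(K) \gtrsim \kappa$ and $\mu(B(x,r)) \leq r$, and then pick ``heavy'' cubes $\mu(Q) \geq c\delta$; but this route can fail when $\mu$ is genuinely two-dimensional (e.g., Lebesgue on a disc), in which case there are no heavy cubes, whereas the content-based max-flow approach above succeeds uniformly.
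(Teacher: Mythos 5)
Your argument is correct but takes a genuinely different route from the paper. The paper's one-line proof (``choose a $\delta$-net inside $K$ and discard surplus points'', with details deferred to [FO, Proposition A.1]) is an iterative pruning of a maximal $\delta$-separated subset of $K$, with the content lower bound controlling how much is discarded scale by scale. You instead obtain the selection in one shot by max-flow/min-cut on the dyadic tree. Both proofs exploit the same fact -- that any dyadic cover of $K$ has total side-length $\gtrsim \kappa$ -- but yours packages the pruning as a single LP-duality step and avoids invoking any version of Frostman's lemma. One small bookkeeping point: with leaf capacity $C\ell(Q)/\delta = C$ as you set it, an integer max-flow assigns values in $\{0,\dots,C\}$ to leaves rather than $\{0,1\}$, so to extract a genuine sub-collection of leaves it is cleaner to put capacity $1$ at the leaves (the min-cut estimate survives, since a leaf of side $\delta$ contributes $\sim 1$ both to the cut value and to $\delta^{-1}\sum\ell(Q)$), or else to accept a factor-$C$ loss when passing from the flow to its support. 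The $3\times 3$ colouring then gives $\delta$-separation, and the hierarchical bound transfers to balls exactly as you describe (for $r > 1$ the ball condition is trivial once one adds the standing hypothesis $K \subset B(0,1)$). Your closing observation is also fair: a Frostman measure for $\calH^{1}_{\infty}$ combined with naive heavy-cube selection can fail when the measure spreads mass two-dimensionally, which is why the content-based dyadic selection, rather than a fixed Frostman measure, is the right tool here.
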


\begin{proof} Choose a $\delta$-net inside $K$ and discard surplus points. For more details, see \cite[Proposition A.1]{FO}.
\end{proof}

\begin{definition}[Incidences] Let $\calT$ be a family of infinite tubes of width $\delta$, and let $P \subset \R^{2}$ be a finite set of points. The set of \emph{incidences} $I(P,\calT)$ between $P$ and $\calT$ is the following family of pairs:
\begin{displaymath} I(P,\calT) := \{(p,T) : p \in P, \: T \in \calT \text{ and } p \in T\}. \end{displaymath}
\end{definition}

The definition will be applied to subsets of the set $P$ from Lemma \ref{discretisation}, and subsets of the following family $\calT$ of tubes:
\begin{definition}[Tubes $\calT$]\label{tubes} Let $P \subset K$. For each $e \in E_{s}$, cover $\pi_{e}(P)$ by $\leq \delta^{-s}$ intervals $I$ of length $\delta$ and bounded overlap (this is possible since $N(\pi_{e}(P),\delta) \leq N(\pi_{e}(K),\delta) \leq \delta^{-s}$), and let $\calT_{e}$ be the family of $\delta$-tubes of the form $\pi_{e}^{-1}(I)$. Then, let
\begin{displaymath} \calT := \bigcup_{e \in E_{s}} \calT_{e}. \end{displaymath}
\end{definition}

The basic strategy in the proofs will be to bound $|I(P,\calT)|$ both from above and below. The desirable lower bound is trivial:
\begin{lemma}\label{lowerBound} Let $P$ be an arbitrary finite set in $\R^{2}$, and construct $\calT$ as in Definition \ref{tubes}. Then
\begin{displaymath} |I(P,\calT)| \geq |P||E_{s}|. \end{displaymath}
\end{lemma}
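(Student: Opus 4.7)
The statement is essentially a bookkeeping observation, so my plan is to exhibit, for each pair $(p,e) \in P \times E_{s}$, at least one tube $T \in \calT_{e}$ that contains $p$, and then argue that incidences arising from distinct pairs $(p,e)$ are themselves distinct elements of $I(P,\calT)$.

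First I would fix arbitrary $p \in P$ and $e \in E_{s}$. By construction, the intervals chosen in Definition \ref{tubes} cover $\pi_{e}(P)$, so $\pi_{e}(p)$ belongs to at least one interval $I$ of length $\delta$ among those selected, which in turn means $p \in \pi_{e}^{-1}(I) = T$ for some $T \in \calT_{e} \subset \calT$. Therefore $(p,T) \in I(P,\calT)$.

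The only thing to verify is that the map $(p,e) \mapsto (p,T)$ produced this way is injective. If $(p,e)$ and $(p',e')$ yield the same incidence $(p,T) = (p',T')$, then of course $p = p'$; moreover, the tube $T \in \calT_{e}$ has a direction perpendicular to $e$ and the tube $T' \in \calT_{e'}$ has a direction perpendicular to $e'$, so $T = T'$ forces $e = e'$ (recall that the vectors in $E_{s}$ are taken to be $\delta^{\tau}$-separated in $S^{1}$, so in particular distinct). Summing the contributions yields $|I(P,\calT)| \geq |P| \cdot |E_{s}|$.

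There is no real obstacle here; the only subtlety is simply making sure that one interprets $\calT$ as a disjoint union over $e \in E_{s}$, so that the $|E_{s}|$ factor is not lost through collisions between families $\calT_{e}$ and $\calT_{e'}$ with $e \neq e'$.
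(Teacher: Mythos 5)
Your proof is correct and is essentially the same one-line observation the paper makes: every $p \in P$ lies in at least one tube from each family $\calT_{e}$, giving $|P||E_{s}|$ distinct incidences. You have simply spelled out the injectivity bookkeeping (including the sensible remark that $\calT$ should be read as a disjoint union over $e$) that the paper leaves implicit.
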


\begin{proof} Each point $p \in P$ is contained in at least one tube from each family $\calT_{e}$, $e \in E_{s}$.
\end{proof}

Kaufman's $\delta^{-s}$-bound \eqref{discreteKaufman} will follow from comparing the previous bound with the one provided by the next proposition.
\begin{proposition}\label{cellIncidences} Assume that $P \subset K$ is a $(\delta,1)$-set, and $\calT$ is the collection of tubes from Definition \ref{tubes}, associated with $P$. Then
\begin{displaymath} |I(P,\calT')| \lessapprox |P||\calT'|^{1/2} + |\calT'| + \sqrt{\delta^{-\tau}|P||\calT'|} \end{displaymath}
for any subset $\calT' \subset \calT$. 
\end{proposition}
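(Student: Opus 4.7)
The plan is to establish the bound via Cauchy--Schwarz on the tube side of $I(P, \calT')$, reducing the problem to counting pairs of points that share a common tube. By Cauchy--Schwarz,
\[
|I(P, \calT')|^{2} \leq |\calT'|\sum_{T \in \calT'} |P \cap T|^{2} = |\calT'|\bigl(|I(P, \calT')| + X\bigr),
\]
where $X := \sum_{p_{1} \neq p_{2}} \#\{T \in \calT' : p_{1}, p_{2} \in T\}$. Solving the resulting quadratic in $|I(P, \calT')|$ reproduces the three terms in the proposition, provided one proves
\[
X \lessapprox |P|^{2} + \delta^{-\tau}|P|.
\]

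The geometric core of the argument is an upper bound on the number of tubes in $\calT'$ passing through a prescribed pair of points at distance $r := |p_{1} - p_{2}| \geq \delta$. If a tube with direction $e \in E_{s}$ contains both points, then $|\langle e, p_{1} - p_{2}\rangle| \lesssim \delta$, forcing $e$ to lie within angle $\lesssim \delta/r$ of a direction perpendicular to $p_{1} - p_{2}$. Combined with the fact that $E_{s}$ has been discretised to a $\delta^{\tau}$-separated set and that the tubes within each $\calT_{e}$ have bounded overlap, this gives
\[
\#\{T \in \calT' : p_{1}, p_{2} \in T\} \lesssim 1 + \delta^{1-\tau}/r.
\]

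To bound $X$, I will then perform a dyadic decomposition of the distance $r$. The $(\delta,1)$-set property provides at most $\lesssim |P| r/\delta$ ordered pairs at distance $\sim r$, while the trivial bound $|P|^{2}$ is sharper once $r \gtrsim |P|\delta$. The split occurs at the critical scale $r \sim \delta^{1-\tau}$: for $\delta \leq r \leq \delta^{1-\tau}$, the product (pair count)$\times$(tubes per pair) gives $(|P|r/\delta)\cdot(\delta^{1-\tau}/r) = |P|\delta^{-\tau}$ per dyadic scale, summing to $\lessapprox |P|\delta^{-\tau}$; for $r \geq \delta^{1-\tau}$, each pair has only $O(1)$ common tubes, and using the better of the two pair-count bounds in each dyadic window yields $\lessapprox |P|^{2}$.

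The main obstacle, and the only step that genuinely uses the hypotheses, is the bisecant bound above: the $\delta^{\tau}$-separation of the directions is exactly what supplies the angular gain $\delta^{1-\tau}/r$, and hence what produces the exponent $-\tau$ in the final estimate. Everything else is standard Cauchy--Schwarz together with dyadic bookkeeping; in particular, no polynomial partitioning or incidence-theoretic machinery beyond the above pair-counting is required for this proposition.
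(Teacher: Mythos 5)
Your proposal is correct and follows essentially the same route as the paper: Cauchy--Schwarz on the tube sums to reduce to counting pairs of points sharing a tube, the angular bisecant bound $\lesssim 1 + \delta^{1-\tau}/r$ coming from the $\delta^{\tau}$-separation of $E_{s}$, and a dyadic decomposition in the pair distance using the $(\delta,1)$-set property, with the threshold at $r \sim \delta^{1-\tau}$. The only cosmetic difference is that you square the Cauchy--Schwarz inequality up front and solve the resulting quadratic, whereas the paper keeps the square root and substitutes at the end; the underlying estimate $X \lessapprox \delta^{-\tau}|P| + |P|^{2}$ is identical.
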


\begin{proof} Using the definition of $I(P,\calT')$ and Cauchy-Schwarz,
\begin{equation}\label{KaufmanEstimate} |I(P,\calT')| = \sum_{T \in \calT'} |\{p \in P \cap T\}| \leq |\calT'|^{1/2} \left( \sum_{T \in \calT'} |\{(p,q) : p,q \in P \cap T\}| \right)^{1/2}. \end{equation}
It remains to estimate the sum on the right hand side:
\begin{align*} \sum_{T \in \calT'} & |\{(p,q) : p,q \in P \cap T\}| = \sum_{p,q \in P} |\{T \in \calT' : p,q \in P \cap T\}|\\
& = \sum_{p \in P} |\{T \in \calT' : p \in P \cap T\}| + \sum_{p \neq q} |\{T \in \calT' : p,q \in P \cap T\}|. \end{align*}
The first sum equals $|I(P,\calT')|$ again, which gives rise to the $|\calT'|$-term in \eqref{KaufmanEstimate}. To estimate the second sum, one uses the finite overlap of the tubes in any fixed family $\calT_{e}$ to estimate
\begin{displaymath} \sum_{p \neq q} |\{T \in \calT' : p,q \in P \cap T\}| \lesssim \sum_{p \neq q} |\{e \in E_{s} : p,q \in T \text{ for some } T \in \calT_{e}\}|. \end{displaymath}
At this point, one applies the standard geometric fact that the set of vectors $e \in S^{1}$ such that $p,q$ can share a common $\delta$-tube in $\calT_{e}$ is contained in two arcs of length $\lesssim \delta/|p - q|$. Since the vectors in $E_{s}$ are $\delta^{\tau}$-separated, this leads to
\begin{equation}\label{form1} |\{e \in E_{s} : p,q \in T \text{ for some } T \in \calT_{e}\}| \lesssim \max\left\{\frac{\delta^{1 - \tau}}{|p - q|}, 1\right\}. \end{equation}
Observe that the "$1$" is really needed here, because if $|p - q|$ is far greater than $\delta^{1 - \tau}$, the arcs mentioned above have length far smaller than $\delta^{\tau}$, but it is still perfectly possible for one $\delta^{\tau}$-separated vector to land in any such arc. The bound \eqref{form1} leads to
\begin{align*} \sum_{p \neq q} |\{e \in E_{s} \colon & p,q \in T \text{ for some } T \in \calT_{e}\}| \lesssim \sum_{p \neq q} \max\left\{\frac{\delta^{1 - \tau}}{|p - q|},1\right\}\\
& = \sum_{p \in P} \left( \sum_{q : |p - q| \leq \delta^{1 - \tau}} \frac{\delta^{1 - \tau}}{|p - q|} + \sum_{q : |p - q| > \delta^{1 - \tau}} 1 \right)\\
& \lesssim \sum_{p \in P} \left( \delta^{-\tau} \log\left(\frac{1}{\delta}\right) + |P| \right) \lessapprox \delta^{-\tau}|P| + |P|^{2}. \end{align*} 
The inequality between the last two lines was obtained by splitting $P$ around $p$ in annuli of radius $\sim 2^{-j}$, $0 \leq j \leq \log(1/\delta)$, and using the $(\delta,1)$-set hypothesis. Rearranging terms completes the proof.
\end{proof}

To prove Kaufman's $\delta^{-s}$-bound \eqref{discreteKaufman} (or the $r = \delta$ case of Theorem \ref{main}), one uses Lemma \ref{discretisation} to find a $(\delta,1)$-set $P \subset K$ with $|P| \sim \delta^{-1}$. Then, the lower and upper bounds of Lemma \ref{lowerBound} and Proposition \ref{cellIncidences} (with $\calT' = \calT$) combined yield
\begin{displaymath} \delta^{-1}|E_{s}| \lesssim |P||E_{s}| \leq |I(P,\calT)| \lessapprox \delta^{-1}|\calT|^{1/2} + |\calT|. \end{displaymath}
Since $|\calT_{e}| \leq \delta^{-s}$ for every $e \in E_{s}$, this gives
\begin{displaymath} |E_{s}| \lessapprox (\delta^{-s}|E_{s}|)^{1/2} + |E_{s}|\delta^{1 - s}. \end{displaymath}
Given that $s < 1$, the term $|E_{s}|\delta^{1 - s}$ cannot dominate the left hand side, and the proof is finished by taking squares and moving terms. 

In the proof of Theorem \ref{main}, one has to make more efficient use of Proposition \ref{cellIncidences}: the key point is that it gives a reasonably good bound for $|I(P,\calT)|$, when $|P| \approx \delta^{-\tau}$ -- which is crucially better than the best possible bound obtainable with mere $\delta$-separation. So, the strategy will be to use an algebraic variety -- a zero-set of a polynomial in two variables -- to partition $P$ into chunks of approximately this size, and then control the incidences in each chunk separately. As is common with such a cell-decomposition argument, one has to handle separately the case where most of $P$ is concentrated in the $\delta$-neighbourhood of the variety.

\section{Proof of the first main theorem}\label{mainProof}

A central tool is the polynomial cell decomposition theorem of Guth and Katz, see \cite[Theorem 4.1]{GK}, which is quoted below:
\begin{thm}[Guth-Katz] Let $P \subset \R^{2}$ be a finite set of points, and let $D \geq 1$ be an integer. Then, there exists an algebraic variety $Z$ of degree $\deg(Z) \leq D$ with the following property: the complement $\R^{2} \setminus Z$ is the union of $\leq D^{2}$ open cells $O_{i}$ such that $\partial O_{i} \subset Z$, and and each cell contains $\lesssim |P|/D^{2}$ points of $P$. 
\end{thm}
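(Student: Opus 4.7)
The plan is to iterate the polynomial ham sandwich theorem, a classical consequence of the Borsuk--Ulam theorem: given any $N$ finite point sets $S_{1},\ldots,S_{N} \subset \R^{2}$, one can find a nonzero polynomial $f$ of degree $d$, where $d$ is the least integer with $\binom{d + 2}{2} > N$ (so $d \lesssim \sqrt{N}$), such that each of the open regions $\{f > 0\}$ and $\{f < 0\}$ contains at most $|S_{i}|/2$ points of every $S_{i}$. This is the only genuinely non-elementary input.

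Starting from $P$, I would run the following bisection scheme. At round $k \geq 1$ I would have on hand $2^{k - 1}$ point sets, each produced by the previous round. Applying polynomial ham sandwich to these yields a polynomial $f_{k}$ with $\deg f_{k} \lesssim 2^{(k - 1)/2}$ whose zero set simultaneously bisects all of them, doubling the number of pieces. I would stop after $K := 2\lceil \log_{2} D \rceil$ rounds, at which point there are $\lesssim D^{2}$ pieces, each containing $\lesssim |P|/D^{2}$ points of $P$.

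Setting $f := f_{1} f_{2} \cdots f_{K}$ and $Z := \{f = 0\}$, the individual degrees sum as a geometric series of ratio $\sqrt{2}$:
\begin{displaymath} \deg f = \sum_{k = 1}^{K} \deg f_{k} \lesssim \sum_{k = 1}^{K} 2^{(k - 1)/2} \lesssim 2^{K/2} \lesssim D. \end{displaymath}
Every connected component of $\R^{2} \setminus Z$ avoids $Z$ entirely, so each $f_{k}$ has a fixed sign on it; hence each such component is contained in one of the $2^{K}$ sign-defined regions produced by the bisections, and in particular contains $\lesssim |P|/D^{2}$ points of $P$ by construction. The number of components of $\R^{2} \setminus Z$ is bounded by $\lesssim D^{2}$ via the classical Milnor--Thom / Oleinik--Petrovskii bound on the number of connected components of the complement of a real algebraic curve of degree $\leq \deg f$.

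The main obstacle, once polynomial ham sandwich is accepted as a black box (its proof applies Borsuk--Ulam to the Veronese embedding $\R^{2} \hookrightarrow \R^{\binom{d+2}{2}}$ together with the signed-imbalance functionals of the $S_{i}$), is essentially bookkeeping: the degree bound $\deg f \lesssim D$ and the cell count $\lesssim D^{2}$ must hold \emph{simultaneously}, and this is exactly what pins down the choice $K \sim 2\log_{2} D$. A minor subtlety is that a cell of $\R^{2} \setminus Z$ is determined by a sign vector of $(f_{1},\ldots,f_{K})$ rather than by an individual factor, so the point count in each cell is automatically inherited from the bisection at the last round that actually cuts it.
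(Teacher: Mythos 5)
The paper does not prove this statement: it is quoted verbatim from Guth--Katz \cite[Theorem 4.1]{GK}, so there is no internal proof to compare against. Your argument is the standard one from that reference -- iterated polynomial ham sandwich, degree controlled by the geometric series $\sum 2^{(k-1)/2}$, cells read off from the partition -- and it is correct.

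Two small remarks on the bookkeeping. First, the appeal to Milnor--Thom/Ole\u{\i}nik--Petrovski\u{\i} is avoidable: if the cells $O_{i}$ are taken to be the (open) sign regions of the tuple $(f_{1},\ldots,f_{K})$ rather than the connected components of $\R^{2} \setminus Z$, their number is at most $2^{K} \lesssim D^{2}$ by construction, the boundary condition $\partial O_{i} \subset Z$ still holds, and the point-per-cell bound $\leq |P|/2^{K}$ is immediate; moreover the only place the decomposition is used downstream, Lemma \ref{guthLemma}, works equally well for sign regions, since a line $L \not\subset Z$ meets $\R^{2}\setminus Z$ in at most $D+1$ intervals and each interval lies in a single sign region. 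Second, the statement demands $\deg Z \leq D$, not $\lesssim D$, and your choice $K = 2\lceil \log_{2} D\rceil$ gives $\deg f \leq C \cdot 2^{K/2}$, which may exceed $D$ by a bounded factor; this is repaired by lowering $K$ by an absolute constant so that $\sum_{k \leq K} \deg f_{k} \leq D$, at the price of a correspondingly larger implicit constant in the $\lesssim |P|/D^{2}$ per-cell count, which the statement permits. Neither point is a gap, just constant-tracking.
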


To begin the proof of Theorem \ref{main} in earnest, apply the partitioning theorem with the $(\delta,1)$-set $P \subset K$ of cardinality $|P| \sim \delta^{-1}$, obtained from Lemma \ref{discretisation}, and with some large integer $D \geq 1$ to be optimised later. Let $Z$ be the ensuing polynomial surface of degree $\leq D$, and let $\tilde{O}_{i}$, $1 \leq i \leq N \leq D^{2}$ be the components of the complement $\R^{2} \setminus Z$. Finally, let
\begin{displaymath} O_{i} := \tilde{O}_{i} \setminus Z(\delta), \end{displaymath}
where $Z(\delta) := \{x : \dist(x,Z) \leq \delta\}$ is the closed $\delta$-neighbourhood of $Z$. The reason for defining the cells $O_{i}$ so is the following simple consequence of B\'ezout's theorem (first observed in \cite{Gu}):
\begin{lemma}\label{guthLemma} Let $T$ be an infinite tube of width $\delta$. Then $T$ can intersect at most $D + 1$ cells $O_{i}$.
\end{lemma}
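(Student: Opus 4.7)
The plan is to reduce the statement to B\'ezout's theorem applied to the central line of the tube $T$, and then use the specific thickening in the definition $O_i = \tilde O_i \setminus Z(\delta)$ to transfer information from the line to the whole tube.

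First I would let $\ell$ denote the central line of $T$, so that $T$ is contained in the closed $\delta/2$-neighbourhood of $\ell$. The degenerate case $\ell \subset Z$ is immediate: then $T \subset Z(\delta)$, so $T$ is disjoint from every $O_i$ and the conclusion is vacuous. Otherwise, B\'ezout's theorem gives $|\ell \cap Z| \leq \deg(Z) \leq D$, since $\ell$ is cut out by a degree-$1$ polynomial not dividing the defining polynomial of $Z$. Consequently $\ell \setminus Z$ consists of at most $D+1$ open intervals, each of which (being connected and disjoint from $Z$) lies in a single component $\tilde O_j$ of $\R^2 \setminus Z$.

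Next I would argue that whenever $T$ meets a cell $O_i$, the line $\ell$ itself already meets the corresponding component $\tilde O_i$. Pick any $p \in T \cap O_i$ and let $p'$ be the orthogonal projection of $p$ onto $\ell$. Then $|p - p'| \leq \delta/2$, but the definition of $O_i$ forces $\dist(p, Z) > \delta$, so the whole segment $[p,p']$ is contained in the open ball $B(p,\delta)$, which is disjoint from $Z$. Therefore $p$ and $p'$ lie in the same connected component of $\R^2 \setminus Z$, namely $\tilde O_i$, giving $p' \in \ell \cap \tilde O_i$.

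Combining the two observations, the map sending a cell $O_i$ met by $T$ to the component $\tilde O_i$ of $\R^2 \setminus Z$ met by $\ell$ is injective (it is the identity on indices), and the image has cardinality at most $D+1$ by the first step. I do not expect any real obstacle here; the only subtle point is the degenerate case $\ell \subset Z$, which is precisely why the authors shrink $\tilde O_i$ to $O_i = \tilde O_i \setminus Z(\delta)$ in the first place, and that case is handled by the one-line remark above.
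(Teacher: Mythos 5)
Your proof is correct and takes essentially the same approach as the paper: reduce to the central line $L_T$, apply B\'ezout to bound $|L_T \cap Z|$ by $D$ when $L_T \not\subset Z$, and use the $\delta$-thickening $O_i = \tilde O_i \setminus Z(\delta)$ to show $T \cap O_i \neq \emptyset$ forces $L_T \cap \tilde O_i \neq \emptyset$. The only cosmetic differences are that you handle the degenerate case $L_T \subset Z$ up front rather than at the end, and you spell out the segment argument (that $[p,p']$ avoids $Z$ because $\dist(p,Z) > \delta$ while $|p - p'| \leq \delta/2$) which the paper leaves implicit.
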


\begin{proof} Let $L_{T}$ be the central line of $T$. For every $i$ such that $T \cap O_{i} \neq \emptyset$, one has $L_{T} \cap \tilde{O}_{i} \neq \emptyset$, and this is only possible for $\leq D + 1$ values of $i$: namely, if there were $D + 2$ values or more, then $L_{T}$ would contain at least $D + 1$ points on the polynomial surface $Z$, and by B\'ezout's theorem, this would force $L_{T}$ to be contained on $Z$. Consequently, $T$ would be contained in the $\delta/2$-neighbourhood of $Z$ and could not, in fact, touch any of the cells $O_{i}$. \end{proof}

The proof of Theorem \ref{main} now divides into two main cases, according to whether or not most of the points in $P$ are contained in the union of the cells $O_{i}$. The argument in the first, "cellular" case closely resembles a (by now) standard proof of the Szemer\'edi-Trotter incidence theorem, while the "non-cellular" situation arguably requires more case-specific reasoning. As a final remark, the proof of Theorem \ref{main} would be shorter and require no polynomials, if the set $P$ had a product form, say $P = A \times A$, to begin with. Then one could perform the cell-decomposition by hand using two perpendicular families of straight lines, and the "non-cellular" case could not even occur. 
\begin{figure}[h!]
\begin{center}
\includegraphics[scale = 0.6]{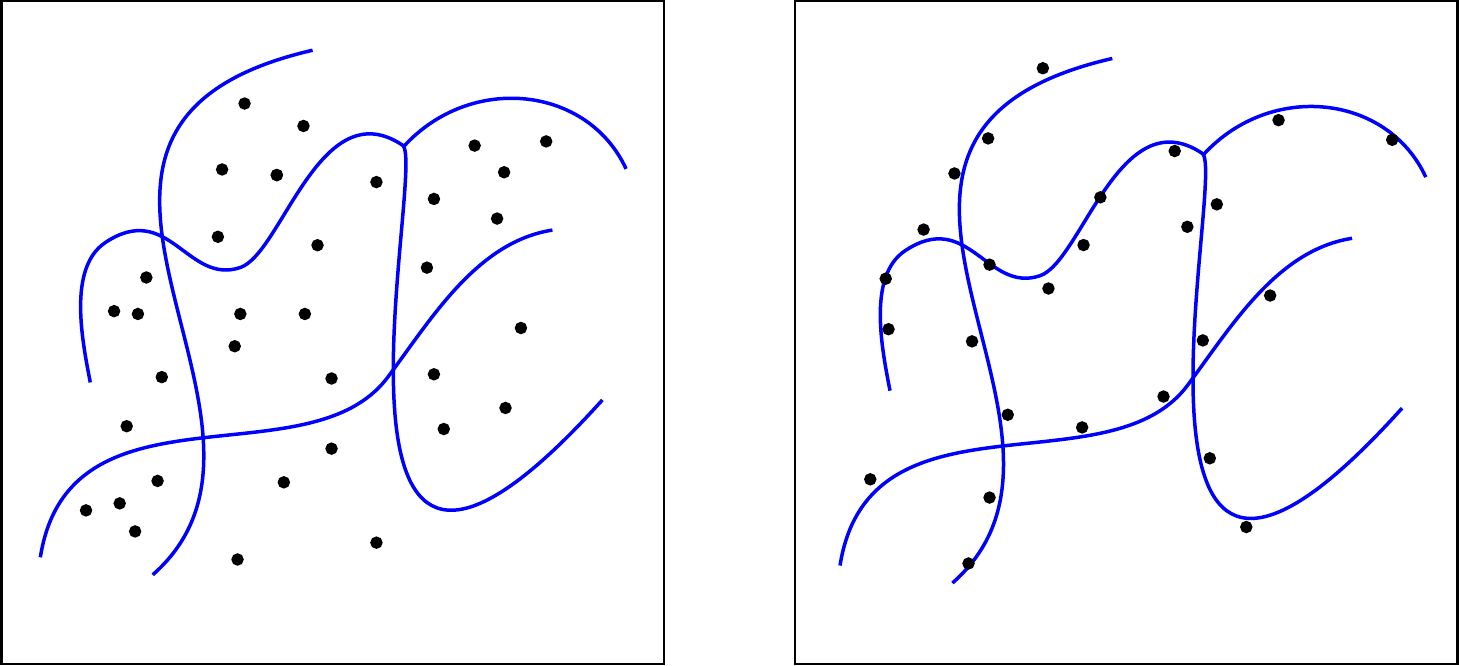}
\caption{The cellular and non-cellular cases}
\end{center}
\end{figure}

\subsection{The cellular case} In this subsection, assume that $|\tilde{P}| \geq |P|/2 \sim \delta^{-1}$, where 
\begin{equation}\label{form2} \tilde{P} := P \cap \bigcup_{i = 1}^{N} O_{i}. \end{equation}

First, discard all the cells, and the points of $\tilde{P}$ within, such that $|\tilde{P} \cap O_{i}| < \delta^{-\tau}$. Since the number of cells is bounded by $D^{2}$, this results in the removal of at most $D^{2}\delta^{-\tau}$ points of $\tilde{P}$, and this is smaller than $|\tilde{P}|/2$ as long as
\begin{equation}\label{form3} D \leq c\delta^{(\tau - 1)/2} \end{equation}
for some small absolute constant $c$. Assume \eqref{form3} in the sequel, and note that the remaining points of $\tilde{P}$ still satisfy \eqref{form2}: hence, keep the notation $\tilde{P}$ for convenience, and observe that $|O_{i} \cap \tilde{P}| \geq \delta^{-\tau}$ for the non-empty cells $O_{i}$.

Let $\calT$ be the collection of tubes introduced in Definition \eqref{tubes}, with $P$ replaced by $\tilde{P}$ (the definition of $E_{s}$ need not be changed to reflect the projections of $\tilde{P}$). Then
\begin{equation}\label{form4} |I(\tilde{P},\calT)| \geq |\tilde{P}||E_{s}| \sim \delta^{-1}|E_{s}| \end{equation}
by Lemma \ref{lowerBound}, and it remains to find an upper bound in the spirit of the end of the previous section.

First, write
\begin{equation}\label{form50} |I(\tilde{P},\calT)| = \sum_{i = 1}^{N} |I(\tilde{P} \cap O_{i},\calT)| = \sum_{i = 1}^{N} |I(\tilde{P} \cap O_{i},\calT^{i})|, \end{equation}
where $\calT^{i}$ is the collection of tubes $T \in \calT$ with $T \cap O_{i} \neq \emptyset$, and the sum only runs over the non-empty cells $O_{i}$. Observe that $\tilde{P} \cap O_{i}$ and $\calT^{i} \subset \calT$ satisfy the assumptions of Proposition \ref{cellIncidences}, so 
\begin{align*} |I(\tilde{P} \cap O_{i},\calT^{i})| & \lessapprox |\tilde{P} \cap O_{i}||\calT^{i}|^{1/2} + |\calT^{i}| + \sqrt{\delta^{-\tau}|\tilde{P} \cap O_{i}||\calT^{i}|}\\
& \lesssim |\tilde{P} \cap O_{i}||\calT^{i}|^{1/2} + |\calT^{i}|, \end{align*}
where the latter inequality used $\delta^{-\tau} \leq |\tilde{P} \cap O_{i}|$. Plugging the estimate into \eqref{form50}, recalling that $|\tilde{P} \cap O_{i}| \lesssim |\tilde{P}|/D^{2}$ and $N \leq D^{2}$, and using Cauchy-Schwarz yields
\begin{align*} |I(\tilde{P},\calT)| & \lessapprox \sum_{i = 1}^{N} |\tilde{P} \cap O_{i}||\calT^{i}|^{1/2} + \sum_{i = 1}^{N} |\calT^{i}|\\
& \lesssim \frac{|\tilde{P}|}{D} \left( \sum_{i = 1}^{N} |\calT^{i}| \right)^{1/2} + \sum_{i = 1}^{N} |\calT^{i}|. \end{align*}
Finally, by Lemma \ref{guthLemma},
\begin{displaymath} \sum_{i = 1}^{N} |\calT^{i}| = \sum_{T \in \calT} \sum_{i = 1}^{N} \chi_{\{T \cap O_{i} \neq \emptyset\}} \leq (D + 1)|\calT|, \end{displaymath}
so that
\begin{displaymath} \delta^{-1}|E_{s}| \lesssim |I(\tilde{P},\calT)| \lessapprox \frac{|\tilde{P}|}{D^{1/2}} |\calT|^{1/2} + D|\calT| \lesssim \frac{\delta^{-1}}{D^{1/2}}\left(|E_{s}|\delta^{-s}\right)^{1/2} + D|E_{s}|\delta^{-s}, \end{displaymath}
using \eqref{form4} in the left-hand side inequality. The second term on the right hand side cannot dominate the left hand side, if $D$ is significantly smaller than $\delta^{s - 1}$: tracking the constants behind the $\lessapprox$-notation, and combining with the restriction coming from \eqref{form3}, the correct thing to assume is
\begin{displaymath} D \leq c\min \{\delta^{s - 1}/\log(1/\delta),\delta^{(\tau - 1)/2}\}. \end{displaymath}
for some small absolute constant $c > 0$ (the second term in the $\min$ comes from \eqref{form3}). For such a choice of $D$,
\begin{equation}\label{form5} |E_{s}| \lessapprox \frac{\delta^{-s}}{D}. \end{equation}
This finishes the proof of the cellular case.  The degree $D$ will be optimised later.

\subsection{The non-cellular case} In this subsection, assume that $|\tilde{P}| \geq |P|/2 \sim \delta^{-1}$, where
\begin{displaymath} \tilde{P} := P \cap Z(\delta). \end{displaymath}
The strategy is to use the existence of many small projections to force $Z$ to contain many lines, which is impossible if $D$ is small enough.

Since every point in $p \in \tilde{P}$ lies in the $\delta$-neighbourhood of $Z$, there exists a point $z_{p} \in Z$ with $|p - z_{p}| \leq \delta$. Let $C_{p}$ be the component of $Z$ containing $z_{p}$. Given a number $\nu > 0$ to be specified momentarily, call $p$ an $\nu$-\emph{bad} point, if there exist two vectors $e_{1},e_{2} \in E_{s}$ with $|e_{1} - e_{2}| \gtrsim |E_{s}|\delta^{\tau}$ such that the maximal (component) interval of $\pi_{e_{i}}(B(p,2\delta) \cap C_{p})$ containing $\pi_{e_{i}}(z_{p})$ has length $\leq \nu$ for $i = 1,2$ (including the case where the component interval is just the single point $\pi_{e_{i}}(z_{p})$). The claim is that there cannot be many $\nu$-bad points in $\tilde{P}$. Figure \ref{fig2} is relevant to the following argument.
\begin{figure}[h!]
\begin{center}
\includegraphics[scale = 0.5]{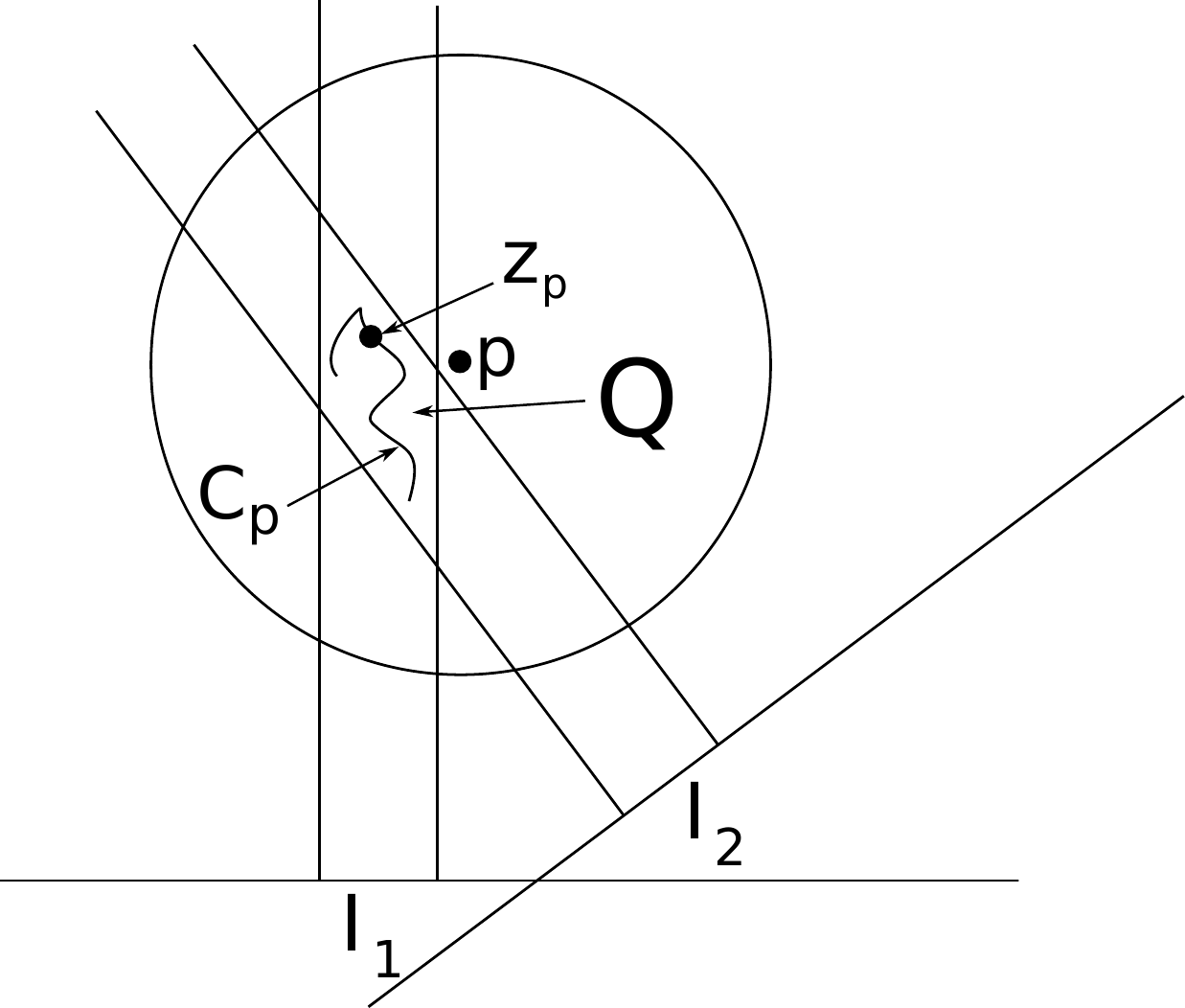}
\caption{The picture near an $\nu$-bad point $p \in \tilde{P}$.}\label{fig2}
\end{center}
\end{figure}

Fix a $\nu$-bad point $p \in \tilde{P}$, so that the corresponding component intervals of both $\Pi_{1} := \pi_{e_{1}}(B(p,2\delta) \cap C_{p})$ and $\Pi_{2} := \pi_{e_{2}}(B(p,2\delta) \cap C_{p})$ have length $\leq \nu$. Then, one can find two open intervals $I_{1}$ and $I_{2}$, containing $\pi_{e_{1}}(z_{p})$ and $\pi_{e_{2}}(z_{p})$, respectively, of length $\leq 2\nu$, and such that 
\begin{equation}\label{form7} \partial I_{1} \cap \Pi_{1} = \emptyset = \partial I_{2} \cap \Pi_{2}. \end{equation} 
By elementary geometry, the box $Q := \pi_{e_{1}}^{-1}(I_{1}) \cap \pi_{e_{2}}^{-1}(I_{2})$ has diameter
\begin{displaymath} \diam(Q) \lesssim \frac{\nu}{|E_{s}|\delta^{\tau} + \nu}. \end{displaymath}
Hence, $Q$ is an open box containing $z_{p}$, and contained in $B(p,2\delta)$ if $\nu \leq c|E_{s}|\delta^{1 + \tau}$ for a sufficiently small constant $c > 0$ (recall that $z_{p} \in B(p,\delta)$). It follows from these observations that, for such $\nu > 0$, in fact $C_{p} \subset Q \subset B(p,2\delta)$: otherwise $C_{p}$ should intersect the boundary of $Q$, and since this happens inside $B(p,2\delta)$, one has either $\partial I_{1} \cap \Pi_{1} \neq \emptyset$ or $\partial I_{2} \cap \Pi_{2} \neq \emptyset$ contrary to \eqref{form7}.

To summarise, if 
\begin{equation}\label{rConstraint} \nu = c|E_{s}|\delta^{1 + \tau} \end{equation}
for a suitable small constant $c > 0$, then for every $\nu$-bad point $p \in \tilde{P}$, there exists a component of $Z$ inside $B(p,2\delta)$. By Harnack's curve theorem, see \cite{Ha}, the number of components of $Z$ is bounded by $\lesssim D^{2}$, so as long as \eqref{rConstraint} holds, and 
\begin{equation}\label{DRestriction} D^{2} \leq c|\tilde{P}|, \end{equation}
there are at most $|\tilde{P}|/2$ $\nu$-bad points in $\tilde{P}$. These points are now discarded from $\tilde{P}$. For notational convenience, the remaining points are still denoted by $\tilde{P}$.

Assuming that $|E_{s}| \geq 2$ -- as one may -- pick two vectors $e_{1},e_{2} \in E_{s}$ with $|e_{1} - e_{2}| \gtrsim |E_{s}|\delta^{\tau}$. Since no point in $\tilde{P}$ is $\nu$-bad, the following holds for either $i = 1$ or $i = 2$: there is a subset $P' \subset \tilde{P}$ of cardinality $|P'| \geq |\tilde{P}|/2$ such that $\calH^{1}(\pi_{e_{i}}(B(p,2\delta) \cap Z)) \geq \nu$ for all $p \in P'$.\footnote{Obviously this is a weaker requirement than the existence of a long component interval in $\pi_{e_{i}}(B(p,2\delta) \cap C_{p})$: the stronger claim was simply introduced, because it was easier to prove.} Assume that this holds for $i = 1$. 

Next, observe that, if $c > 0$ is small enough, there exists a tube of the form $T_{0} := \pi_{e_{1}}^{-1}(I_{0})$, with $\ell(I_{0}) = \delta$, and $|P' \cap T_{0}| \geq c\delta^{s - 1}$. Indeed, since $e_{1} \in E_{s}$, one can first cover $\pi_{e_{1}}(P') \subset \pi_{e_{1}}(P)$ with $\leq \delta^{-s}$ intervals $I$ of length $\delta$, and then observe that only $|P'|/2$ points can be contained in tubes of the form $\pi_{e_{1}}^{-1}(I)$ with $|P' \cap \pi_{e_{1}}^{-1}(I)| < c\delta^{s - 1}$: in particular, there exists a tube $T_{0}$ satisfying the opposite inequality. Finally, assume that the points $p \in P' \cap T_{0}$ are $5\delta$-separated (if not, discard additional points and observe that $\gtrsim \delta^{s - 1}$ points in $P' \cap T_{0}$ remain).

Pick a line $l$ passing through -- and parallel to -- $5T_{0}$ uniformly at random, and for a given point $p \in P' \cap T_{0}$, consider the random variable 
\begin{displaymath} X_{p}(l) := \chi_{\{l \cap Z \cap B(p,2\delta) \neq \emptyset\}}. \end{displaymath}
Since $\calH^{1}(B(p,2\delta) \cap Z) \geq \nu = c|E_{s}|\delta^{1 + \tau}$, and $\pi_{e_{1}}(B(p,2\delta)) \subset 5I_{0}$, one has $\E[X_{p}] \gtrsim |E_{s}|\delta^{\tau}$, and
\begin{displaymath} \E\left[\sum_{p \in P' \cap T_{0}} X_{p}\right] \gtrsim \delta^{s  + \tau - 1}|E_{s}|. \end{displaymath}
Since the points $p \in P' \cap T_{0}$ are $5\delta$-separated, the sum $\sum X_{p}(l)$ gives a lower bound for \textbf{distinct} intersections of $l$ with $Z$. On the other hand, it follows from B\'ezout's theorem that almost every line in any fixed direction hits $Z$ in at most $D$ distinct points (since the lines with more than $D$ intersections are contained in $Z$, and $Z$ has null Lebesgue measure), so
\begin{displaymath} |E_{s}| \lesssim D\delta^{1 - s - \tau}. \end{displaymath}

The only restriction on $D$ required for this inequality was $D^{2} \leq c\delta^{-1}$, recalling \eqref{DRestriction}.  

\subsection{Conclusion of the proof} With $r = \delta^{\tau}$, the claim was that 
\begin{equation}\label{form10} N(E_{s},\delta^{\tau}) \lessapprox \min\{\delta^{-s + (1 - \tau)/2},\delta^{-\tau}\}. \end{equation}
Obviously, 
\begin{displaymath} N(E_{s},\delta^{\tau}) \lesssim \delta^{-\tau}, \end{displaymath}
and this coincides with the minimum in \eqref{form10}, if $\tau \leq 2s - 1$. So, one may assume that $\tau > 2s - 1 \geq 0$. Now, the previous two subsections have shown that 
\begin{displaymath} |E_{s}| \lessapprox \max\left\{ \frac{\delta^{-s}}{D},D\delta^{1 - s - \tau}\right\}, \end{displaymath}
where $D$ is any integer satisfying
\begin{equation}\label{form8} D \leq c \min\{\delta^{s - 1}/\log(1/\delta),\delta^{(\tau - 1)/2}\}. \end{equation}
Since $\tau > 2s - 1$, one has $\delta^{(\tau - 1)/2} < \delta^{s - 1}/\log(1/\delta)$, so one is allowed to choose $D = c\delta^{(\tau - 1)/2}$, and this results in
\begin{displaymath} |E_{s}| \lessapprox \delta^{-s + (1 - \tau)/2}. \end{displaymath}
The proof of \eqref{form10}, and Theorem \ref{main}, is complete.

\section{Projections of $1$-AD regular measures}\label{ADR}

This section contains the proof of Theorem \ref{main2}. 
\begin{definition} A Borel measure $\mu$ on $[0,1)^{d}$ is \emph{$(1,A)$-Ahlfors-David regular} -- or \emph{$(1,A)$-AD regular} in short -- if
\begin{displaymath} \frac{r}{A} \leq \mu(B(x,r)) \leq Ar \end{displaymath}
for all $x \in \spt \mu$ and $0 < r \leq \diam(\spt \mu)$. An $\calH^{1}$-measurable set $K \subset [0,1)^{d}$ is called $(1,A)$-AD regular, if $0 < \calH^{1}(K) < \infty$, and the restriction $\mu := \calH^{1}|_{K}$ of $\calH^{1}$ to $K$ is $(1,A)$-AD regular.
\end{definition}

Here is the statement of Theorem \ref{main2} once again:

\begin{thm} Given $s < 1$ and $A > 0$, there are numbers $p = p(s,A) \in \N$ and $\delta(A,s) > 0$ with the following property. Let
\begin{displaymath} S_{p} := \{e^{2\pi i k/p} : 0 \leq k < p\} \subset S^{1}. \end{displaymath}
Then, for any $(1,A)$-AD regular set $K \subset [0,1)^{d}$ with $\calH^{1}(K) \sim 1$,
\begin{equation}\label{form14} \frac{1}{p} \sum_{e \in S_{p}} N(\pi_{e}(K),\delta) \geq \delta^{-s}, \qquad 0 < \delta \leq \delta(A,s). \end{equation}
\end{thm}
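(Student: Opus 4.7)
I would argue by contradiction. Suppose the conclusion fails: there exist $s_0 \in [1/2, 1)$, $A > 0$, and for every $p$ there are arbitrarily small $\delta > 0$ and a $(1,A)$-AD regular set $K \subset [0,1)^2$ with $\calH^1(K) \sim 1$ satisfying $\frac{1}{p}\sum_{e \in S_p} N(\pi_e(K), \delta) < \delta^{-s_0}$. Choose a buffer $s \in (s_0, 1)$. By Markov's inequality there is a subset $E \subset S_p$ of cardinality $|E| \geq p/2$ with $N(\pi_e(K), \delta) \leq \delta^{-s}$ for every $e \in E$; this $E$ is $\sim 1/p$-separated.

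A single-scale application of Theorem \ref{main} with $r = 1/p$ then forces
\[
p/2 \leq |E| \lessapprox \delta^{-s + 1/2}\, p^{1/2}, \qquad \text{i.e.} \qquad p \lessapprox \delta^{1-2s}.
\]
For $s = 1/2$ this is already a contradiction as soon as $p$ is large, but for $s > 1/2$ the quantity $\delta^{1-2s}$ blows up as $\delta \to 0$, so one scale does not suffice. Here I would invoke Lemma \ref{multiScale}, attributed to Hochman. Combined with the AD-regularity of $K$, it should allow the single-scale counter-assumption to be propagated: one produces a geometric sequence of scales $\delta = \delta^{(0)} < \delta^{(1)} < \cdots < \delta^{(N)} \sim 1$ with common ratio $\eta = \eta(s, s_0) \in (0,1)$, and corresponding $(1, A')$-AD regular rescalings $K^{(j)}$ of sub-pieces of $K$ to unit scale, at each of which the small-projection sum bound persists at the fixed rescaled scale $\eta$ (with the same $s_0$ on the right). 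Applying the display above to each $K^{(j)}$ with $\delta$ replaced by the constant $\eta$ yields the bound $p \lessapprox \eta^{1 - 2s}$, now with \emph{no} $\delta$-dependent loss. Choosing $p = p(s, A) > C(s, A)\, \eta^{1-2s}$ delivers the desired contradiction, and the statement $\overline{\dim}_{\textup{B}} \pi_e(K) \geq s$ for some $e \in S_p$ follows by taking $\delta \to 0$.

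The principal obstacle is the multi-scale step. Lemma \ref{multiScale} must be arranged so that the rescaled pieces $K^{(j)}$ remain $(1, A')$-AD regular with $A' = A'(A)$ independent of $j$, and so that the transferred counter-assumption still has exponent $s_0$ on the right-hand side; the buffer $s - s_0 > 0$ is present precisely to absorb the logarithmic $\lessapprox$-factors in Theorem \ref{main}, as well as the mild constant losses incurred when passing from $K$ to its rescalings. Once this multi-scale reduction is in place, Theorem \ref{main} supplies the combinatorial engine that converts "many $r$-separated well-covered directions" into a polynomial bound on $p$, and AD-regularity ensures the reduction can be iterated at least once (it is that single iteration which upgrades the vacuous bound $p \lessapprox \delta^{1-2s}$ into the finite bound $p \lessapprox \eta^{1-2s}$ needed for the contradiction).
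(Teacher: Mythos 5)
Your approach differs fundamentally from the paper's, and it has a genuine gap exactly where you yourself flag trouble: the multi-scale propagation step.

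The paper never uses Theorem \ref{main} to prove Theorem \ref{main2}. Instead it works directly with entropy. The single-scale input is an $L^{2}$/energy ("entropy Marstrand") estimate, Proposition \ref{entropyMarstrand}, which shows that for a measure with linear growth the normalised scale-$m$ entropy of $\pi_{e\sharp}\mu$ is $\geq s'$ after averaging over $e \in S^{1}$; Corollary \ref{entropyCor} transfers this from $\sigma$-average to the finite set $S_{2^m}$. The multi-scale step is Lemma \ref{multiScale}, Hochman's identity
$H_{n}(\pi_{e\sharp}\mu) \geq \frac{m}{n}\sum_{k}\sum_{Q \in \calD_{km}}\mu(Q)H_{m}(\pi_{e\sharp}\mu^{Q}) - C/m$,
and the crucial point is that \emph{entropy} is exactly additive under conditioning on dyadic partitions (Proposition \ref{CEF}), so the scale-$m$ information about the blow-ups $\mu^{Q}$ (each still AD-regular with uniformly controlled constant) accumulates into a lower bound for the scale-$n$ entropy of the global projection. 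The final step, Lemma \ref{entropyAndCovering}, uses only the implication "large entropy $\Rightarrow$ large covering number," which is the true direction.

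Your argument needs the opposite and false kind of propagation. You want the counter-assumption $N(\pi_{e}(K),\delta) \leq \delta^{-s_{0}}$ to descend to each rescaled sub-piece $K^{(j)} = T_{Q}(K \cap Q)$ as a bound of the form $N(\pi_{e}(K^{(j)}),\eta) \lesssim \eta^{-s_{0}}$ at the \emph{fixed} relative scale $\eta$, for enough $e \in S_{p}$. That does not follow. A global bound $N(\pi_{e}(K),\delta) \leq \delta^{-s_{0}}$ tells you nothing useful about $N(\pi_{e}(K \cap Q), \eta\ell(Q))$ for an intermediate cube $Q$: the coverings of the pieces $\pi_{e}(K \cap Q)$ can overlap heavily inside $\pi_{e}(K)$, so the sum $\sum_{Q} N(\pi_{e}(K \cap Q), \eta\ell(Q))$ can vastly exceed $N(\pi_{e}(K),\delta)$ and there is no pigeonhole that forces most blow-ups to inherit the covering-number deficiency. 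Covering numbers simply lack the conditional additivity that entropy has, and the paper's remark after Lemma \ref{entropyAndCovering} ("the converse of the lemma is false") is precisely the obstruction you would run into. So, as written, the step "the small-projection sum bound persists at the fixed rescaled scale $\eta$" is unjustified, and no choice of buffer $s - s_{0}$ repairs it: the problem is structural, not a matter of absorbing logarithmic losses.

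Two further, smaller points. First, Lemma \ref{multiScale} is stated for entropy, not covering numbers, so invoking it already forces you onto the entropy track; the honest version of your argument would then be close to the paper's direct proof, just phrased as a contrapositive, and would no longer need Theorem \ref{main} at all. Second, even granting the propagation, applying Theorem \ref{main} with $r = 1/p$ requires $1/p \geq \eta$, i.e. $p \leq 1/\eta$, while your contradiction asks for $p > C\eta^{1-2s}$; since $1 - 2s < 0$ for $s > 1/2$ these are compatible, but it is a constraint one must check, and it is absent in your sketch. In short: the single-scale computation you give is correct but, as you note, insufficient, and the multi-scale step you appeal to does not exist for covering numbers; the paper's replacement of covering numbers by entropy is not a cosmetic choice but the mechanism that makes the multi-scale argument go through.
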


The proof of Theorem \ref{main2} will use the notion of entropy, and in fact \eqref{form14} will be deduced from an intermediary conclusion of the form "the measure $\calH^{1}|_{K}$ has at least one projection with large entropy." 

\subsection{Preliminaries on entropy and projections} The presentation of this subsection follows closely that of M. Hochman's paper \cite{Ho}, although I only need a fraction of the machinery developed there. In the interest of being mostly self-contained, I will repeat some of the arguments in \cite{Ho}. 

\begin{definition}[Measures and their blow-ups in $\R^{d}$] Given a set $\Omega$, let $\calP(\Omega)$ stand for the space of Borel probability measures on $\Omega$. In what follows, $\Omega$ will be $\R^{d}$, or a cube in $\R^{d}$, and $d \in \{1,2\}$. If $Q = r[0,1)^{d} + a$ is a cube in $\R^{d}$, let $T_{Q}(x) := (x - a)/r$ be the unique homothety taking $Q$ to $[0,1)^{d}$. Given a measure $\mu \in \calP(\R^{d})$ and a cube $Q$ as above, with $\mu(Q) > 0$, define the measures
\begin{displaymath} \mu_{Q} := \frac{1}{\mu(Q)} \mu|_{Q} \in \calP(Q) \quad \text{and} \quad \mu^{Q} := T_{Q\sharp}(\mu_{Q}) \in \calP([0,1)^{d}), \end{displaymath} 
where $\mu|_{Q}$ is the restriction of $\mu$ to $Q$, and $T_{Q\sharp}$ is the push-forward under $T_{Q}$. So, $\mu^{Q}$ is a "blow-up" of $\mu_{Q}$ into $[0,1)^{d}$.
\end{definition}

\begin{definition}[Entropy]\label{entropy} Let $\mu \in \calP(\Omega)$, and let $\calF$ be a countable $\mu$-measurable partition of $\Omega$. Set
\begin{displaymath} H(\mu,\calF) := -\sum_{F \in \calF} \mu(F) \log \mu(F), \end{displaymath}
where the convention $0 \cdot \log 0 := 0$ is used. If $\calE$ and $\calF$ are two $\mu$-measurable partitions, one also defines the conditional entropy
\begin{displaymath} H(\mu,\calE | \calF) := \sum_{F \in \calF} \mu(F)  \cdot H(\mu_{F},\calE), \end{displaymath}
where, in accordance with previous notation, $\mu_{F} := \mu|_{F}/\mu(F)$, if $\mu(F) > 0$. 
 \end{definition} 
 
 The notion of conditional entropy is particularly useful, when $\calE$ \emph{refines} $\calF$, which means that every set in $\calE$ is contained in a (unique) set in $\calF$:
 \begin{proposition}[Conditional entropy formula]\label{CEF} Assume that $\calE,\calF$ are partitions as in Definition \ref{entropy}, and $\calE$ refines $\calF$. Then
 \begin{displaymath} H(\mu,\calE|\calF) = H(\mu,\calE) - H(\mu,\calF). \end{displaymath}
 In particular, $H(\mu,\calE) \geq H(\mu,\calF)$.
 \end{proposition}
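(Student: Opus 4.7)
The plan is to expand the conditional entropy directly from the definition and reassemble the result as a difference of two absolute entropies. I would begin by unfolding
\begin{displaymath} H(\mu,\calE|\calF) = \sum_{F \in \calF} \mu(F) H(\mu_{F},\calE) = -\sum_{F \in \calF} \mu(F) \sum_{E \in \calE} \mu_{F}(E) \log \mu_{F}(E), \end{displaymath}
where $F$ is restricted to the sets with $\mu(F) > 0$. The refinement hypothesis says that every $E \in \calE$ is contained in a unique $F \in \calF$. Consequently $\mu_{F}(E) = \mu(E)/\mu(F)$ when $E \subset F$ and $\mu_{F}(E) = 0$ otherwise, so only the terms with $E \subset F$ contribute.

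Next I would split the inner logarithm using $\log(\mu(E)/\mu(F)) = \log \mu(E) - \log \mu(F)$ and multiply through by $\mu(F)$ to clear the denominator. This leaves two sums: a "numerator" term $-\sum_{F \in \calF}\sum_{E \subset F} \mu(E) \log \mu(E)$, which by the refinement property is just $-\sum_{E \in \calE} \mu(E) \log \mu(E) = H(\mu,\calE)$; and a "denominator" term $\sum_{F \in \calF} \log \mu(F) \sum_{E \subset F} \mu(E) = \sum_{F \in \calF} \mu(F) \log \mu(F) = -H(\mu,\calF)$, using that the sets $E \subset F$ partition $F$.

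Combining the two contributions yields the claimed identity $H(\mu,\calE|\calF) = H(\mu,\calE) - H(\mu,\calF)$. The "in particular" statement is immediate from this identity and the fact that $H(\mu_{F},\calE) \geq 0$ for each $F$, since $-t\log t \geq 0$ for $t \in [0,1]$; hence $H(\mu,\calE|\calF) \geq 0$. There is no real obstacle here beyond a little care in justifying that the rearrangement of the double sum is valid, which is immediate from non-negativity of all the terms (so Fubini/Tonelli for counting measure applies with no convergence issues).
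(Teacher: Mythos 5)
Your argument is exactly the paper's: expand $H(\mu,\calE|\calF)$, use the refinement hypothesis to replace $\mu_F(E)$ by $\mu(E)/\mu(F)$ on the sets $E \subset F$, split the logarithm, and resum each piece to recover $H(\mu,\calE)$ and $-H(\mu,\calF)$. The only (minor, correct) addition is your explicit remark that Tonelli justifies the rearrangement of the nonnegative double sum.
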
 
 
 \begin{proof} For $F \in \calF$, let $\calE(F) := \{E \in \calE : E \subset F\}$. A direct computation gives
 \begin{align*} H(\mu,\calE | \calF) & = - \sum_{F \in \calF} \mu(F) \cdot \sum_{E \in \calE} \mu_{F}(E) \log \mu_{F}(E)\\
 & = - \sum_{F \in \calF} \sum_{E \in \calE(F)} \mu(E) \log \frac{\mu(E)}{\mu(F)}\\
 & = - \left( \sum_{E \in \calE} \mu(E) \log \mu (E) - \sum_{F \in \calF} \log \mu(F) \sum_{E \in \calE(F)} \mu(E) \right)\\
 & = H(\mu,\calE) + \sum_{F \in \calF} \mu(F) \log \mu(F) = H(\mu,\calE) - H(\mu,\calF), \end{align*}
as claimed.  \end{proof} 

The partitions $\calE,\calF$ used below will be the dyadic partitions of $\R^{d}$: $\calE,\calF = \calD_{n}$, where $\calD_{n}$ stands for the collection of dyadic cubes of side-length $2^{-n}$. The lemma below contains two more useful and well-known -- or easily verified -- properties of entropy. The items are selected from \cite[Lemma 3.1]{Ho} and \cite[Lemma 3.2]{Ho}.
\begin{lemma}\label{factsOfLife} Let $\calE, \calF$ be countable $\mu$-measurable partitions of $\Omega$.
\begin{itemize} 
\item[(i)] The functions $\mu \mapsto H(\mu,\calE)$ and $\mu \mapsto H(\mu,\calE | \calF)$ are concave.
\item[(ii)] If $\spt \mu \subset B(0,R)$, and $f,g \colon B(0,R) \to \R$ are functions so that $|f(x) - g(x)| \leq R2^{-n}$ for $x \in B(0,R)$, then 
\begin{displaymath} |H(f_{\sharp}\mu,\calD_{n}) - H(g_{\sharp}\mu,\calD_{n})| \leq C, \end{displaymath}
where $C > 0$ only depends on $R$.
\end{itemize}
\end{lemma}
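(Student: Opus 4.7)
Both items of the lemma are standard folklore, but let me lay out how I would organise their proofs.

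For (i), the key observation is that $\phi(t) := -t \log t$ is concave on $[0,\infty)$, and that for each fixed $E \in \calE$ the map $\mu \mapsto \mu(E)$ is affine. Hence $H(\mu,\calE) = \sum_{E \in \calE} \phi(\mu(E))$ is a sum of concave functions of affine functions of $\mu$, and the first concavity claim is immediate. For the conditional version I would reduce to the unconditional case just proved: given $\mu = t\mu_{1} + (1-t)\mu_{2}$, each normalised restriction $\mu_{F}$ is a convex combination of $\mu_{1,F}$ and $\mu_{2,F}$ with weights $\alpha_{F} := t\mu_{1}(F)/\mu(F)$ and $1 - \alpha_{F}$. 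Applying concavity of $H(\cdot,\calE)$ inside each summand of $H(\mu,\calE | \calF) = \sum_{F} \mu(F)\cdot H(\mu_{F},\calE)$, and then absorbing the factors $\mu(F)$, returns exactly $t H(\mu_{1},\calE | \calF) + (1-t) H(\mu_{2},\calE | \calF)$ as a lower bound.

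For (ii), the natural vehicle is the identity $H(f_{\sharp}\mu, \calD_{n}) = H(\mu, f^{-1}\calD_{n})$ combined with Proposition \ref{CEF}. Form the common refinement $\calE := f^{-1}\calD_{n} \vee g^{-1}\calD_{n}$, which refines both factor partitions. The displacement hypothesis $|f(x) - g(x)| \leq R 2^{-n}$ implies that whenever $f(x) \in D$ for some $D \in \calD_{n}$, the point $g(x)$ lies in the $R 2^{-n}$-neighbourhood of $D$; since $d \in \{1,2\}$, this neighbourhood meets only $C(R)$ cubes of $\calD_{n}$. Consequently each atom of $f^{-1}\calD_{n}$ is covered by at most $C(R)$ atoms of $\calE$, so that for any such atom $A$ the restricted measure $\mu_{A}$ charges at most $C(R)$ atoms of $\calE$. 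This yields $H(\mu, \calE | f^{-1}\calD_{n}) \leq \log C(R)$, and symmetrically $H(\mu, \calE | g^{-1}\calD_{n}) \leq \log C(R)$. Proposition \ref{CEF} then rearranges into $|H(\mu, f^{-1}\calD_{n}) - H(\mu, g^{-1}\calD_{n})| \leq 2 \log C(R)$, which is the desired conclusion.

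Neither step is really an obstacle; the only point worth flagging is that one should invoke Proposition \ref{CEF} on the common refinement $\calE$ before attempting any direct comparison of the $-\sum \mu(D)\log \mu(D)$ sums atom-by-atom, since the atoms of $f^{-1}\calD_{n}$ and $g^{-1}\calD_{n}$ are in general not nested. Observe also that $\mu$ enters the estimate in (ii) only via the identity $H(f_{\sharp}\mu,\calD_{n}) = H(\mu,f^{-1}\calD_{n})$: no quantitative regularity is ever used.
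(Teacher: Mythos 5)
The paper itself gives no proof of this lemma, citing instead \cite[Lemmas 3.1, 3.2]{Ho}. Your argument supplies the standard proofs and is correct. Part (i) is the usual concavity argument: $t \mapsto -t\log t$ is concave, $\mu \mapsto \mu(E)$ is affine, and the conditional case reduces to the unconditional one via the convex decomposition $\mu_{F} = \alpha_{F}\mu_{1,F} + (1-\alpha_{F})\mu_{2,F}$ with $\alpha_{F} = t\mu_{1}(F)/\mu(F)$; the weight cancellation $\mu(F)\alpha_{F} = t\mu_{1}(F)$ closes the loop exactly as you say. Part (ii) is also the expected route: pass to $H(\mu, f^{-1}\calD_{n})$, take the join $\calE = f^{-1}\calD_{n} \vee g^{-1}\calD_{n}$, use the displacement bound to see that each atom of $f^{-1}\calD_{n}$ splits into at most $C(R)$ atoms of $\calE$ so that the conditional entropy gain is $\leq \log C(R)$, and then read off the two-sided bound from Proposition~\ref{CEF}. (In fact your chain gives $|H(\mu,f^{-1}\calD_{n}) - H(\mu,g^{-1}\calD_{n})| \leq \log C(R)$, slightly better than the stated $2\log C(R)$, since $H(\mu,\calE)\ge\max\{H(\mu,f^{-1}\calD_{n}),H(\mu,g^{-1}\calD_{n})\}$.) One tiny imprecision: the relevant dimension for counting how many cubes of $\calD_{n}$ the $R2^{-n}$-neighbourhood of an atom meets is that of the \emph{target} of $f,g$, which here is $\R$, not $d\in\{1,2\}$; this does not affect the conclusion since the count is $O(R)$ in any fixed dimension.
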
 

Finally, for $n \in \N$, write $H_{n}$ for the \emph{normalised scale $2^{-n}$-entropy}
\begin{displaymath} H_{n}(\mu) := \frac{1}{\log 2^{n}} \cdot H(\mu,\calD_{n}) = \sum_{Q \in \calD_{n}} \mu(Q) \cdot \left(\frac{\log \mu(Q)}{\log 2^{-n}} \right). \end{displaymath}
This number is best interpreted as the "average local dimension of $\mu$ at scale $2^{-n}$". Now, all the definitions and tools are in place to state and prove the key auxiliary result from Hochman's paper, namely \cite[Lemma 3.5]{Ho}, in slightly modified form:
\begin{lemma}\label{multiScale} Let $\mu \in \calP([0,1)^{2})$, $e \in S^{1}$, and $m,n \in \N$ with $m < n$. Then
\begin{displaymath} H_{n}(\pi_{e\sharp}\mu) \geq \frac{m}{n} \sum_{k = 0}^{\floor*{n/m} - 1} \sum_{Q \in \calD_{km}} \mu(Q) \cdot H_{m}(\pi_{e\sharp}\mu^{Q}) - \frac{C}{m}, \end{displaymath}
where $C > 0$ is an absolute constant. 
\end{lemma}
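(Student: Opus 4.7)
The approach I would follow is the standard multi-scale entropy telescoping of Hochman \cite{Ho}. Throughout I write $\calD_j$ for the dyadic partition at scale $2^{-j}$, used either in $\R$ or in $\R^{2}$ as context requires. Since $\calD_n$ refines $\calD_{\floor*{n/m} m}$, Proposition \ref{CEF} gives $H(\pi_{e\sharp}\mu,\calD_n) \geq H(\pi_{e\sharp}\mu,\calD_{\floor*{n/m} m})$, and telescoping across the sub-scales $0, m, 2m, \ldots, \floor*{n/m} m$ (discarding the non-negative term at scale $0$) yields
\begin{displaymath}
H(\pi_{e\sharp}\mu,\calD_n) \geq \sum_{k = 0}^{\floor*{n/m}-1} H(\pi_{e\sharp}\mu, \calD_{(k+1)m} \mid \calD_{km}).
\end{displaymath}
The game is now to bound each of these $\floor*{n/m}$ summands below by $\sum_{Q \in \calD_{km}} \mu(Q) \, H(\pi_{e\sharp}\mu^{Q},\calD_m)$ with an $O(1)$ loss that is uniform in $k$.

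For a fixed $k$, I would split $\mu = \sum_{Q \in \calD_{km}} \mu(Q)\mu_{Q}$ in the 2D sense and invoke concavity of conditional entropy from Lemma \ref{factsOfLife}(i) to push the sum outside,
\begin{displaymath}
H(\pi_{e\sharp}\mu, \calD_{(k+1)m} \mid \calD_{km}) \geq \sum_{Q \in \calD_{km}} \mu(Q) \, H(\pi_{e\sharp}\mu_{Q}, \calD_{(k+1)m} \mid \calD_{km}).
\end{displaymath}
For each such $Q$, the measure $\pi_{e\sharp}\mu_{Q}$ sits inside an interval of length at most $\sqrt{2}\cdot 2^{-km}$ and hence meets at most three atoms of $\calD_{km}$, so $H(\pi_{e\sharp}\mu_{Q},\calD_{km}) \leq \log 3$. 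Proposition \ref{CEF} then trades the conditional entropy for an unconditional one at a cost of $\log 3$. To relate $H(\pi_{e\sharp}\mu_{Q},\calD_{(k+1)m})$ to the blown-up picture, I would use the identity $\pi_{e} \circ T_{Q}^{-1}(y) = 2^{-km}\pi_{e}(y) + \pi_{e}(a_{Q})$, which exhibits $\pi_{e\sharp}\mu_{Q}$ as an affine image of $\pi_{e\sharp}\mu^{Q}$ with contraction $2^{-km}$; pulling the partition back gives $H(\pi_{e\sharp}\mu_{Q},\calD_{(k+1)m}) = H(\pi_{e\sharp}\mu^{Q},\widetilde{\calD}_{m})$, where $\widetilde{\calD}_{m}$ is a translate of $\calD_{m}$. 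Since each atom of either of $\calD_{m}, \widetilde{\calD}_{m}$ meets at most two atoms of the other, both entropies are within $\log 2$ of the entropy of their common refinement, hence within $\log 2$ of each other.

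Assembling the three displays, summing over $k$ and $Q$, dividing by $n\log 2$, and using the identity $H(\pi_{e\sharp}\mu^{Q},\calD_m) = m \log 2 \cdot H_m(\pi_{e\sharp}\mu^{Q})$ together with $\floor*{n/m}/n \leq 1/m$ delivers the claimed estimate with $C = (\log 3 + \log 2)/\log 2$. The one genuinely delicate step — and the point I would expect to have to handle carefully — is the translated-partition comparison at the rescaling stage: it is essential that the $O(1)$ loss coming from replacing $\widetilde{\calD}_m$ by $\calD_m$ is bounded uniformly in $k$ and $Q$, so that the $\floor*{n/m}$ independent losses collapse into the single $C/m$ error on the right-hand side rather than accumulating into something like $C\floor*{n/m}/n$ which would be useless when $m$ is large.
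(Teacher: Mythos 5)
Your proof is correct and follows essentially the same route as the paper: telescope $H(\pi_{e\sharp}\mu,\calD_n)$ via the conditional entropy formula across the scales $0, m, 2m, \ldots, \floor*{n/m}m$, apply concavity (Jensen) to pass to the cells $Q \in \calD_{km}$, and rescale to the blow-ups $\mu^{Q}$ with an $O(1)$ loss per step. If anything you are slightly more careful than the paper at the rescaling stage: the paper writes $H(\pi_{e\sharp}\mu_{Q},\calD_{(k+1)m}\mid\calD_{km}) = H(\pi_{e\sharp}\mu^{Q},\calD_{m}\mid\calD_{0})$ as an identity, which is literal only when $2^{km}\pi_{e}(a_{Q})$ is an integer (with $a_{Q}$ the corner of $Q$) since the rescaled grid is otherwise a translate of $\calD_{m}$, whereas you absorb this into an explicit $\log 2$ translate-comparison and bound the scale-$\calD_{km}$ entropy of $\pi_{e\sharp}\mu_{Q}$ by $\log 3$ separately; both routes give the same $O(1/m)$ error.
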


\begin{proof} Write $n = k_{0}m + r$, where $0 \leq r < m$, and $k_{0} = \floor*{n/m}$. Then
\begin{align*} H(\pi_{e\sharp}\mu,\calD_{n}) \geq H(\pi_{e\sharp}\mu,\calD_{k_{0}m}) & = \sum_{k = 0}^{k_{0} - 1} H(\pi_{e\sharp}\mu, \calD_{(k + 1)m} | \calD_{km}) + H(\pi_{e\sharp}\mu,\calD_{0})\\
& \geq \sum_{k = 0}^{k_{0} - 1} H(\pi_{e\sharp}\mu, \calD_{(k + 1)m} | \calD_{km}) \end{align*} 
by repeated application of Proposition \ref{CEF}. Next, observe that
\begin{align*} \pi_{e\sharp}\mu & = \pi_{e\sharp} \left(\sum_{Q \in \calD_{km}} \mu|_{Q} \right) = \sum_{Q \in \calD_{km}} \pi_{e\sharp}\mu|_{Q} = \sum_{Q \in \calD_{km}} \mu(Q) \cdot \pi_{e\sharp} \mu_{Q}, \end{align*}
so, by Jensen's inequality and the concavity of (conditional) entropy,
\begin{displaymath} H(\pi_{e\sharp}\mu,\calD_{(k + 1)m} | \calD_{km}) \geq \sum_{Q \in \calD_{km}} \mu(Q) \cdot H(\pi_{e\sharp}\mu_{Q}, \calD_{(k + 1)m} | \calD_{km}). \end{displaymath}
Here
\begin{displaymath} H(\pi_{e\sharp}\mu_{Q},\calD_{(k + 1)m}|\calD_{km}) = H(\pi_{e\sharp}\mu^{Q}, \calD_{m} | \calD_{0}) = H(\pi_{e\sharp}\mu^{Q},\calD_{m}) - H(\pi_{e\sharp}\mu^{Q}, \calD_{0}), \end{displaymath}
by Proposition \ref{CEF} once again, where $H(\pi_{e\sharp}\mu^{Q},\calD_{0}) \leq 3$, because $\pi_{e\sharp}\mu^{Q}$ is supported in an interval of length $\sqrt{2}$. This leads to
\begin{align*} H_{n}(\pi_{e\sharp}\mu) & \geq \frac{1}{\log 2^{n}} \sum_{k = 0}^{k_{0} - 1} \sum_{Q \in \calD_{km}} \mu(Q) \cdot (H(\pi_{e\sharp}\mu^{Q},\calD_{m}) - 3)\\
& = \frac{m}{n} \sum_{k = 0}^{k_{0} - 1} \sum_{Q \in \calD_{km}} \mu(Q) \cdot H_{m}(\pi_{e\sharp}\mu^{Q}) - \frac{3k_{0}}{\log 2^{n}}, \end{align*}
where $3k_{0}/\log 2^{n} \leq 10/m$ as claimed. \end{proof} 

\subsection{An entropy version of Marstrand's theorem}

\begin{proposition}\label{entropyMarstrand} Assume that $\mu \in \calP([0,1)^{2})$ satisfies the linear growth condition $\mu(B(x,r)) \leq Ar$ for $x \in \R^{2}$, $r > 0$ and some $A \geq 1$. Then
\begin{displaymath} \int_{S^{1}} H_{m}(\pi_{e\sharp}\mu) \, d\sigma(e) \geq s - ACm \cdot 2^{(s - 1)m}, \qquad 0 < s < 1, \end{displaymath} 
where $\sigma$ is the unit-normalised length measure on $S^{1}$, and $C > 0$ is an absolute constant.
\end{proposition}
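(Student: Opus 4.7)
The plan is to obtain the desired lower bound on $\int H_m(\pi_{e\sharp}\mu)\,d\sigma$ by a direct ``good intervals / bad intervals'' split, combined with an $L^2$-type estimate whose proof mimics the standard second-moment argument in Marstrand's projection theorem. Write $\nu_e := \pi_{e\sharp}\mu$ and, for each dyadic interval $I \in \calD_m$ of $\R$, set $p_I(e) := \nu_e(I)$.

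The first step is to note that whenever $p_I(e) \leq 2^{-sm}$ one has $-\log p_I(e) \geq sm \log 2$. Setting $M(e) := \nu_e\bigl(\bigcup\{I : p_I(e) > 2^{-sm}\}\bigr) = \sum_{p_I(e) > 2^{-sm}} p_I(e)$, this pointwise inequality gives
\begin{displaymath} H(\nu_e,\calD_m) \geq \sum_{p_I(e) \leq 2^{-sm}} p_I(e) \cdot sm \log 2 = sm \log 2 \cdot (1 - M(e)), \end{displaymath}
hence $H_m(\nu_e) \geq s(1 - M(e))$. Integrating this over $\sigma$, the proposition reduces to showing that $\int_{S^1} M(e)\,d\sigma(e) \leq ACm \cdot 2^{(s-1)m}$.

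To bound $M(e)$ I would use the Markov-type inequality $M(e) \leq 2^{sm} \sum_{I} p_I(e)^2$, which is immediate from $p_I(e) > 2^{-sm}$ on the sum's support. Next, expanding the squares and using the fact that $\sum_{I \in \calD_m}\1\{\pi_e x \in I\}\1\{\pi_e y \in I\} \leq \1\{|\pi_e(x-y)| \leq 2^{-m}\}$, Fubini gives
\begin{displaymath} \int_{S^1} \sum_I p_I(e)^2 \, d\sigma(e) \leq \int\!\!\int \sigma\{e \in S^1 : |e \cdot (x-y)| \leq 2^{-m}\}\, d\mu(x)\,d\mu(y). \end{displaymath}
The elementary integral geometric estimate $\sigma\{e : |e \cdot u| \leq 2^{-m}\} \lesssim \min\{1, 2^{-m}/|u|\}$ then reduces matters to estimating $\int\!\!\int \min\{1, 2^{-m}/|x-y|\}\,d\mu(x)\,d\mu(y)$.

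Here the linear growth hypothesis $\mu(B(x,r)) \leq Ar$ enters. For fixed $x$, split the inner integral into the ball $B(x,2^{-m})$ and dyadic annuli $\{2^k \leq |y-x| < 2^{k+1}\}$ for $-m \leq k \leq 0$ (the upper bound is because $\spt \mu \subset [0,1)^2$). The contribution of the ball is $\leq A \cdot 2^{-m}$, and each annulus contributes $\lesssim A \cdot 2^{-m}$, giving a total of $\lesssim Am \cdot 2^{-m}$. Integrating against the probability measure $\mu$ in $x$, one obtains $\int_{S^1} \sum_I p_I(e)^2\, d\sigma \lesssim ACm \cdot 2^{-m}$, so that $\int M\,d\sigma \leq 2^{sm}\cdot ACm \cdot 2^{-m} = ACm \cdot 2^{(s-1)m}$, as required.

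The argument is essentially mechanical once the right threshold ($2^{-sm}$) is chosen; I expect no serious obstacle. The only point requiring a little care is keeping track of which constants depend on $A$ linearly versus logarithmically: choosing the threshold at precisely $2^{-sm}$ (rather than, say, $A \cdot 2^{-sm}$) avoids picking up an extra $\log A$ term in the ``good'' contribution and matches the form claimed in the proposition.
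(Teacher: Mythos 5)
Your proof is correct, and the overall skeleton matches the paper's: both split into ``good'' and ``bad'' intervals, both obtain the entropy lower bound $H_m(\nu_e) \geq s(1 - M(e))$ from the good part, and both reduce the problem to an averaged $L^2$ bound on the dyadic cell measures, namely $\int_{S^1} 2^m\sum_{I \in \calD_m} p_I(e)^2\,d\sigma(e) \lesssim Am$. (Your $M(e)$ is the paper's $\beta_e$.) The genuine difference is \emph{how} that $L^2$ estimate is proved. The paper goes through Fourier analysis: it identifies $\int_{S^1}\|\pi_{e\sharp}\nu\|_2^2\,d\sigma(e)$ with the Riesz $1$-energy $I_1(\nu)$ via Plancherel and polar coordinates, applies this to a mollified measure $\mu*\psi_m$, checks $I_1(\mu*\psi_m) \lesssim Am$, and then passes from $L^2$ norms of smoothed projections back to the discrete sums $\sum p_I^2$. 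You instead expand $\sum_I p_I(e)^2$ directly by Fubini, use the observation $\sum_I \1\{\pi_e x\in I\}\1\{\pi_e y\in I\}\le\1\{|e\cdot(x-y)|\le 2^{-m}\}$, and then exploit the integral-geometric bound $\sigma\{e:|e\cdot u|\le 2^{-m}\}\lesssim\min\{1,2^{-m}/|u|\}$ together with a dyadic annulus decomposition and the linear growth hypothesis. Your route is more elementary and fully self-contained -- it avoids mollification, Plancherel, and the slightly delicate step of comparing $\|(\pi_{e\sharp}\mu)*\phi_m\|_2^2$ with the discrete $\ell^2$ sum, which the paper only sketches. Both arrive at the same $\lesssim Am\cdot 2^{-m}$ bound and then the conclusion follows identically. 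One cosmetic remark: your constant $C$ quietly absorbs the factor $s<1$ in the very last line ($s\cdot ACm\cdot 2^{(s-1)m} \le ACm\cdot 2^{(s-1)m}$), which is harmless but worth noting when stating that $C$ is absolute.
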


\begin{proof} Fix $m \in \N$. It follows from the linear growth condition for $\mu$ that
\begin{equation}\label{form12} \int_{S^{1}} 2^{m} \sum_{Q \in \calD_{m}} [\pi_{e\sharp}\mu(Q)]^{2} \, d\sigma(e) \lesssim Am, \end{equation} 
where $\calD_{m}$ now stands for the length $2^{-m}$ dyadic intervals in $\R$. This is fairly standard, so I only sketch the details: observe that for any $\nu \in \calP([0,1)^{2})$
\begin{align*} \int_{S^{1}} \|\pi_{e\sharp}\nu\|_{2}^{2} \, d\sigma(e) & = \int_{S^{1}} \int_{\R} |\hat{\nu}(te)|^{2} \, dt \, d\sigma(e)\\
& \sim \int_{\R^{2}} |\hat{\nu}(\xi)|^{2}|\xi|^{-1} \, d\xi \sim \iint \frac{d\nu x \, d\nu y}{|x - y|} =: I_{1}(\nu). \end{align*} 
Apply this with $\nu := \mu \ast \psi_{m}$, where $\psi_{m}(x) := 2^{2m}\psi(2^{m}x)$ and $\psi$ is a radial bump function with $\chi_{B(0,5)} \leq \psi \leq \chi_{B(0,10)}$. Using the linear growth condition for $\mu$, it is easy to verify that $I_{1}(\mu \ast \psi_{m}) \lesssim Am$, for $A,m \geq 1$. Further, since $\psi$ is radial, the projection $\pi_{e\sharp}(\mu \ast \psi_{m})$ has the form $(\pi_{e\sharp}\mu) \ast \phi_{m}$, where $\phi_{m}$ is a bump in $\R$ at scale $2^{-m}$, independent of $e$. Finally, the left hand side of \eqref{form12} is controlled by an absolute constant times $\|(\pi_{e\sharp}\mu) \ast \phi_{m}\|_{2}^{2}$. The inequality now follows by combining all the observations.

Let
\begin{displaymath} C_{e} := 2^{m} \sum_{Q \in \calD_{m}} [\pi_{e\sharp}\mu(Q)]^{2}. \end{displaymath}
Then, for $s < 1$ fixed,
\begin{displaymath} \pi_{e\sharp}\mu\left( \bigcup \left\{Q \in \calD_{m} : \pi_{e\sharp}\mu(Q) \geq 2^{-ms} \right\} \right) \leq C_{e}2^{(s - 1)m}, \end{displaymath}
and so
\begin{equation}\label{form13} \int_{S^{1}} \pi_{e\sharp}\mu\left( \bigcup \left\{Q \in \calD_{m} : \frac{\log \pi_{e\sharp}\mu(Q)}{\log 2^{-m}} \leq s \right\} \right) \, d\sigma(e) \lesssim Am \cdot 2^{(s - 1)m}. \end{equation}
Inspired by \eqref{form13}, let
\begin{displaymath} \calD_{m}^{e-\textup{bad}} := \left\{Q \in \calD_{m} : \frac{\log \pi_{e\sharp}\mu(Q)}{\log 2^{-m}} \leq s \right\}, \end{displaymath}
and denote by $\beta_{e}$ the total $\pi_{e\sharp}\mu$-measure of the intervals in $\calD_{m}^{e-\textup{bad}}$. Then,
\begin{align*} \int_{S^{1}} H_{m}(\pi_{e\sharp}\mu) \, d\sigma(e) & \geq \int_{S^{1}} \sum_{Q \in \calD_{m} \setminus \calD_{m}^{e-bad}} \pi_{e\sharp}\mu(Q) \left(\frac{\log \pi_{e\sharp}\mu(Q)}{\log 2^{-m}} \right)\\
& \geq \int_{S^{1}} s(1 - \beta_{e}) \, d\sigma(e) \geq s - ACm \cdot 2^{(s - 1)m}, \end{align*} 
as claimed. \end{proof}

\begin{cor}\label{entropyCor} Let $\mu$ be as in Proposition \ref{entropyMarstrand}, and let $S_{2^{m}} := \{e^{2\pi i k/2^{m}} : 0 \leq k < 2^{m}\} \subset S^{1}$. Then
\begin{displaymath} \frac{1}{|S_{2^{m}}|} \sum_{e \in S_{2^{m}}} H_{m}(\pi_{e\sharp}\mu) \geq s - AC(m \cdot 2^{(s - 1)m} + 1/m). \end{displaymath}
\end{cor}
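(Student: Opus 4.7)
The plan is to deduce the corollary from Proposition \ref{entropyMarstrand} by showing that the Riemann sum over $S_{2^{m}}$ approximates the integral $\int_{S^{1}} H_{m}(\pi_{e\sharp}\mu)\,d\sigma(e)$ up to an error of order $1/m$. The crux will be establishing a Lipschitz-type modulus of continuity for the map $e \mapsto H_{m}(\pi_{e\sharp}\mu)$ at scale $2^{-m}$, which matches exactly the mesh size of $S_{2^{m}}$.

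First I would observe that $\spt\mu \subset [0,1)^{2} \subset B(0,\sqrt{2})$, so for directions $e,e' \in S^{1}$ with $|e-e'| \leq 2^{-m}$ one has $|\pi_{e}(x)-\pi_{e'}(x)| \leq \sqrt{2}\cdot 2^{-m}$ uniformly in $x \in \spt\mu$. Applying Lemma \ref{factsOfLife}(ii) with $R = \sqrt{2}$ and $n = m$ (to the functions $f = \pi_{e}$, $g = \pi_{e'}$), this yields $|H(\pi_{e\sharp}\mu,\calD_{m}) - H(\pi_{e'\sharp}\mu,\calD_{m})| \leq C$ for an absolute constant $C$. Dividing by $\log 2^{m}$ produces the key bound
\begin{displaymath}
|H_{m}(\pi_{e\sharp}\mu) - H_{m}(\pi_{e'\sharp}\mu)| \leq C/m \quad \text{whenever } |e - e'| \leq 2^{-m}.
\end{displaymath}

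Next I would partition $S^{1}$ into $|S_{2^{m}}| = 2^{m}$ arcs $\{A_{e}\}_{e \in S_{2^{m}}}$ of equal $\sigma$-measure $1/|S_{2^{m}}|$, each centred at a point $e \in S_{2^{m}}$ and of diameter $\lesssim 2^{-m}$. For every $e' \in A_{e}$ the Lipschitz bound above (with the absolute constants absorbed into $C$) gives $H_{m}(\pi_{e'\sharp}\mu) \leq H_{m}(\pi_{e\sharp}\mu) + C/m$. Integrating over each $A_{e}$ and summing,
\begin{displaymath}
\int_{S^{1}} H_{m}(\pi_{e'\sharp}\mu)\,d\sigma(e') \leq \frac{1}{|S_{2^{m}}|} \sum_{e \in S_{2^{m}}} H_{m}(\pi_{e\sharp}\mu) + \frac{C}{m}.
\end{displaymath}
Rearranging and invoking the lower bound from Proposition \ref{entropyMarstrand} then gives
\begin{displaymath}
\frac{1}{|S_{2^{m}}|} \sum_{e \in S_{2^{m}}} H_{m}(\pi_{e\sharp}\mu) \geq s - ACm \cdot 2^{(s-1)m} - C/m,
\end{displaymath}
which is the claim after adjusting $C$.

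The only delicate point is the continuity step: $\mu$ may live on a low-dimensional set, so one could worry that $H_{m}(\pi_{e\sharp}\mu)$ oscillates sharply with $e$. Lemma \ref{factsOfLife}(ii) bypasses this worry precisely because a $2^{-m}$ perturbation of $e$ displaces projected points by the same scale $2^{-m}$ as the partition $\calD_{m}$, which forces the unnormalised entropies to differ by at most an absolute constant, hence by $O(1/m)$ after normalisation. No finer probabilistic input is needed; the argument is essentially a Riemann-sum approximation, made legitimate by the one Lipschitz estimate.
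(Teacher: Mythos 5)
Your proof is correct and takes essentially the same route as the paper: partition $S^{1}$ into equal-measure arcs around the points of $S_{2^{m}}$, use Lemma \ref{factsOfLife}(ii) to get the $O(1/m)$ modulus of continuity for $e \mapsto H_{m}(\pi_{e\sharp}\mu)$ at scale $2^{-m}$, and compare the Riemann sum to the integral bounded below in Proposition \ref{entropyMarstrand}. The only cosmetic difference is that you spell out the Lipschitz estimate $|\pi_{e}(x)-\pi_{e'}(x)|\leq\sqrt{2}\,|e-e'|$ explicitly before invoking the lemma, whereas the paper cites the lemma directly; the substance is identical.
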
 

\begin{proof} For $e \in S_{2^{m}}$, partition $S^{1}$ into arcs $J_{e}$ of equal length $\sigma(J_{e}) := 1/|S_{2^{m}}|$ such that $e \in J_{e}$ and $|e' - e| \leq 2^{-m + 1}$ for $e' \in J_{e}$. For fixed $e \in S_{2^{m}}$, Lemma \ref{factsOfLife}(ii) then implies that
\begin{displaymath} |H_{m}(\pi_{e_{1}\sharp}\mu) - H_{m}(\pi_{e_{2}\sharp}\mu)| \lesssim \frac{1}{m}, \qquad e_{1},e_{2} \in J_{e}. \end{displaymath}
Thus, using the previous proposition,
\begin{align*} \frac{1}{|S_{2^{m}}|} \sum_{e \in S_{2^{m}}} H_{m}(\pi_{e\sharp}\mu) & = \sum_{e \in S_{2^{m}}} \int_{J_{e}} H_{m}(\pi_{e\sharp}\mu) \, d\sigma(\xi)\\
& \geq \sum_{e \in S_{2^{m}}} \int_{J_{e}} (H_{m}(\pi_{\xi\sharp}\mu) - C/m) \, d\sigma(\xi)\\
& \geq s - AC(m \cdot 2^{(s - 1)m} + 1/m), \end{align*} 
as claimed. \end{proof}

\subsection{Conclusion of the proof} Fix $0 \leq s < s' < 1$, and let $K \subset [0,1)^{2}$ be $(1,A)$-AD regular. Write $\mu := \calH|_{K}$, fix $m \in \N$, and let $S_{2^{m}} := \{e^{2\pi i k/2^{m}} : 0 \leq k < 2^{m}\} \subset S^{1}$ as in Corollary \ref{entropyCor}. A simple calculation shows that if $Q \subset [0,1)^{2}$ is a cube with $\mu(Q) > 0$, then the blow-up $\mu^{Q} \in \calP([0,1]^{2})$ satisfies the uniform linear growth condition
\begin{displaymath} \mu^{Q}(B(x,r)) \leq \left(\frac{AC\ell(Q)}{\mu(Q)}\right)r \end{displaymath}
for some absolute constant $C \geq 1$. Thus, from Lemma \ref{multiScale} and Corollary \ref{entropyCor}, one infers that, for $m < n$,
\begin{align*} \frac{1}{|S_{2^{m}}|} \sum_{e \in S_{2^{m}}} H_{n}(\pi_{e\sharp}\mu) & \geq \frac{m}{n} \sum_{k = 0}^{\floor*{n/m} - 1} \sum_{Q \in \calD_{km}} \mu(Q) \left[ \frac{1}{|S_{2^{m}}|} \sum_{e \in S_{2^{m}}} H_{m}(\pi_{e\sharp}\mu^{Q}) \right] - \frac{C}{m}\\
& \geq  \frac{m}{n} \sum_{k = 0}^{\floor*{n/m} - 1} \mathop{\sum_{Q \in \calD_{km}}}_{\mu(Q) > 0} \mu(Q) \left(s' - \frac{AC\ell(Q)}{\mu(Q)}(m \cdot 2^{(s' - 1)m} + 1/m)\right) - \frac{C}{m}\\
& = s' \cdot \frac{m}{n} \cdot \floor*{n/m} - \frac{ACm}{n} \sum_{k = 0}^{\floor*{n/m} - 1} \mathop{\sum_{Q \in \calD_{km}}}_{\mu(Q) > 0} \ell(Q)(m \cdot 2^{(s' - 1)m} + 1/m) - \frac{C}{m}. \end{align*} 
To proceed further, observe that, for any fixed generation of squares $Q$ with $\ell(Q) = r$, there are at most $AC/r$ squares $Q$ such that $\mu(Q) > 0$. Indeed, by the $(1,A)$-AD regularity of $\mu$, each square $Q$ with $\ell(Q) = r$ and $\mu(Q) > 0$ is adjacent to a square $Q'$ with $\ell(Q') = r$ and $\mu(Q') \geq r/(100A)$. Since each such "good" square $Q'$ is again adjacent to at most eight other squares $Q$ with $\mu(Q) > 0$, the claim follows. This leads to the estimate
\begin{displaymath} \frac{1}{|S_{2^{m}}|} \sum_{e \in S_{2^{m}}} H_{n}(\pi_{e\sharp}\mu) \geq s' \cdot \frac{m}{n} \cdot \floor*{n/m} - \frac{A^{2}Cm}{n} \cdot \floor*{n/m} \cdot (m \cdot 2^{(s' - 1)m} + 1/m) - \frac{C}{m}, \end{displaymath}
valid for any $0 \leq s' < 1$ and any $(1,A)$-AD regular measure $\mu \in \calP([0,1)^{2})$. Specialising to $s' := (1 + s)/2$, say, and choosing $m = m(A,s)$, where $m(A,s) \in \N$ depends only on $A$ and $s$, one obtains
\begin{displaymath} \frac{1}{|S_{2^{m}}|} \sum_{e \in S_{2^{m}}} H_{n}(\pi_{e\sharp}\mu) \geq s  \end{displaymath}
for all $n \geq n(A,s)$ (a bound depending only on $A,s$ and $m(A,s)$). Via the following lemma, this immediately leads to the desired statement about the covering numbers $N(\pi_{e}(K),\delta)$ with $\delta = 2^{-n}$, $n \geq n(A,s)$. The proof of Theorem \ref{main2} is complete.

\begin{lemma}\label{entropyAndCovering} Let $\nu \in \calP(\R^{d})$, and assume that $H_{n}(\nu) \geq s$. Then
\begin{displaymath} |\{Q \in \calD_{n} : \nu(Q) > 0\}| > 2^{nt} \end{displaymath}
for any $t < s - 1/(n \log 2)$. In particular, $N(\spt \nu,2^{-n}) \gtrsim 2^{nt}$ for such $t$.
\end{lemma}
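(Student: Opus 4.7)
The plan is to deduce the main inequality directly from the well-known entropy-versus-cardinality bound $H(\nu,\calD_n) \leq \log N$, where $N$ denotes the number of cubes $Q \in \calD_n$ with $\nu(Q) > 0$. The second assertion will then follow from an elementary volume-counting argument relating $N$ to the $2^{-n}$-covering number of $\spt \nu$.

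For the first step I write
\begin{displaymath} H(\nu,\calD_n) = \sum_{Q:\,\nu(Q) > 0} \nu(Q) \log \frac{1}{\nu(Q)}, \end{displaymath}
and apply Jensen's inequality (concavity of $\log$) to the probability weights $\{\nu(Q)\}_{\nu(Q) > 0}$ on the $N$ cubes of positive measure, obtaining
\begin{displaymath} H(\nu,\calD_n) \leq \log\left( \sum_{Q:\,\nu(Q) > 0} \nu(Q) \cdot \frac{1}{\nu(Q)} \right) = \log N. \end{displaymath}
Combined with the hypothesis $H_n(\nu) = H(\nu,\calD_n)/(n \log 2) \geq s$, this yields $\log N \geq s n \log 2$, or equivalently $N \geq 2^{ns}$. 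Since $2^{ns} > 2^{nt}$ for any $t < s$, and in particular for $t < s - 1/(n\log 2)$, the strict inequality $N > 2^{nt}$ follows.

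For the "in particular" part I use the elementary fact that any closed ball of radius $2^{-n}$ in $\R^d$ meets at most $C_d$ cubes of $\calD_n$, for some dimensional constant $C_d$. Every $Q \in \calD_n$ with $\nu(Q) > 0$ contains a point of $\spt \nu$, so any cover of $\spt \nu$ by closed $2^{-n}$-balls must use at least $N/C_d$ of them. Consequently $N(\spt \nu, 2^{-n}) \gtrsim N > 2^{nt}$, which completes the proof.

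No serious obstacle is expected; the only minor subtlety is restricting Jensen's step to cubes with $\nu(Q) > 0$, so that the quantities $1/\nu(Q)$ are finite and the convention $0 \log 0 = 0$ is respected automatically.
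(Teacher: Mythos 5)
Your proof is correct, and in fact it establishes a \emph{stronger} conclusion than the lemma claims. You show directly that $N := |\{Q \in \calD_n : \nu(Q) > 0\}| \geq 2^{ns}$, which yields $N > 2^{nt}$ for every $t < s$; the statement only asks for $t < s - 1/(n \log 2)$, so your route eliminates the $1/(n \log 2)$ loss entirely. The reason is that you invoke the classical maximum-entropy inequality $H(\nu,\calD_n) \leq \log N$ (derived via Jensen for the concave function $\log$ applied to the weights $\nu(Q)$ and values $1/\nu(Q)$), which is sharp when the measure distributes mass uniformly over $N$ cubes. The paper instead argues by contradiction using a layer-cake estimate: assuming $N \leq 2^{nt}$, it writes $H_n(\nu)$ as an integral over $\lambda$ of $\nu$-masses of threshold sets $\{Q : \nu(Q) \leq 2^{-\lambda n}\}$, bounds the integrand by $\min\{1, 2^{(t-\lambda)n}\}$, and integrates to get $s \leq t + 1/(n\log 2)$. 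That argument is more hands-on and the geometric-series tail is exactly where the $1/(n\log 2)$ slack enters. Your one-line Jensen argument is simpler, more standard, and quantitatively better; the paper's proof is arguably more self-contained in that it does not appeal to the maximum-entropy fact, but for the purposes of this lemma that is no advantage. Your ``in particular'' step is also fine: since $\nu(\R^d \setminus \spt \nu) = 0$, any $Q$ with $\nu(Q) > 0$ meets $\spt\nu$, and a $2^{-n}$-ball meets $O_d(1)$ cubes of side $2^{-n}$, giving $N(\spt\nu, 2^{-n}) \gtrsim N$.
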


\begin{remark} Note that the converse of the lemma is false: a large covering number certainly does not guarantee large entropy. 
\end{remark}

\begin{proof}[Proof of Lemma \ref{entropyAndCovering}] Assume that $|\{Q \in \calD_{n} : \nu(Q) > 0\}| \leq 2^{nt}$ for some $t$, and let $\calD_{n}^{\lambda-\textup{bad}}$, $\lambda \geq 0$, be the cubes $Q \in \calD_{n}$ such that $\nu(Q) \leq 2^{-\lambda n}$. Then
\begin{displaymath} \sum_{Q \in \calD_{n}^{\lambda-\textup{bad}}} \nu(Q) \leq 2^{(t - \lambda)n}, \qquad \lambda \geq t, \end{displaymath}
so that
\begin{align*} s \leq H_{n}(\nu) & = \int_{0}^{\infty} \nu\left( \bigcup \left\{Q : \frac{\log \nu(Q)}{\log 2^{-n}} \geq \lambda \right\} \right) \, d\lambda\\
& \leq t + \int_{t}^{\infty} \nu \left( \bigcup \left\{ Q : \nu(Q) \leq 2^{-\lambda n} \right\} \right) \, d\lambda\\
& \leq t + \int_{t}^{\infty} 2^{(t - \lambda)n} \, d\lambda = t + \frac{1}{n \log 2}. \end{align*}
This proves the lemma. \end{proof}


\begin{thebibliography}{1234}
\bibitem{Bo} \textsc{J. Bourgain}: \emph{The discretised sum-product and projection theorems}, J. Anal. Math \textbf{112} (2010), pp. 193--236
 \bibitem{FO} \textsc{K. F\"assler and T. Orponen}: \emph{On restricted families of projections in $\R^{3}$}, Proc. London Math. Soc. \textbf{109} (2) (2014), p. 353-381, available at arXiv:1302.6550
\bibitem{Gu} \textsc{L. Guth}: \emph{A restriction estimate using polynomial partitioning}, J. Amer. Math. Soc. \textbf{29} (2016), p. 371--413
\bibitem{GK} \textsc{L. Guth and N. Katz}: \emph{On the Erd\H os distinct distance problem in the plane}, Ann. of Math. \textbf{181}, Issue 1 (2015), p. 155--190
\bibitem{Ha} \textsc{C. G. A. Harnack}: \emph{Über Vieltheiligkeit der ebenen algebraischen Curven}, Math. Ann. \textbf{10} (1876), p. 189--199
\bibitem{Ho} \textsc{M. Hochman}: \emph{Self-similar sets with overlaps and inverse theorems for entropy}, Ann. of Math. \textbf{180}, No. 2 (2014), p. 773--822
\bibitem{KT} \textsc{N. Katz and T. Tao}: \emph{Some connections between Falconer's distance set conjecture, and sets of Furstenberg type}, New York J. Math. \textbf{7} (2001), pp. 149--187
\bibitem{Ka} \textsc{R. Kaufman}: \emph{On Hausdorff dimension of projections}, Mathematika \textbf{15} (1968), pp. 153--155
\bibitem{KL} \textsc{I. Laba and S. Konyagin}: \emph{Distance sets of well-distributed planar sets for polygonal norms}, Israel J. Math. \textbf{152} (2006), p. 157--179
\bibitem{Ma} \textsc{J.M. Marstrand}: \emph{Some fundamental geometrical properties of plane sets of fractional dimensions}, Proc. London Math. Soc. (3) \textbf{4} (1954), pp. 257-302
\bibitem{Mat} \textsc{P. Mattila}: \emph{Geometry of sets and measures in Euclidean spaces: fractals and rectifiability}, Cambridge University Press, 1995
\bibitem{Ob} \textsc{D. Oberlin}: \emph{Restricted Radon transforms and projections of planar sets}, published electronically in Canadian Math. Bull. (2014), available at arXiv:0805.1678
\bibitem{O} \textsc{T. Orponen}: \emph{On the packing dimension and category of exceptional sets of orthogonal projections}, published online in Ann. Mat. Pura Appl. (2015), available at arXiv:1204.2121
\end{thebibliography}
\end{document}